\newtheorem{dref}{Definition}[section] \newtheorem{lemma}[dref]{Lemma}
\newtheorem{theo}[dref]{Theorem} \newtheorem{prop}[dref]{Proposition}
\newtheorem{remark}[dref]{Remark} \newtheorem{ex}[dref]{Example}
\newtheorem{cor}[dref]{Corollary}
\newenvironment{proof}{\par\noindent{{\bf Proof.}}}{\hfill$\Box$
\medskip}
\title{Local analytic regularity in the linearized Calder\'on problem}
\author{Johannes Sj\"ostrand\footnote{ project NOSEVOL ANR
  2011 BS 010119 01}\\\small Institut de
  Math\'ematiques de Bourgogne,
  Universit\'e de Bourgogne\\
  \small 9 avenue Alain Savary - BP 47870\\
  \small 21078 Dijon cedex\\ \footnotesize
  johannes.sjostrand@u-bourgogne.fr\\
  \footnotesize and  UMR 5584 du CNRS\and Gunther Uhlmann\footnote{Partly supported by NSF and a Simons Fellowship}\\\small Department of Mathematics, University of Washington\\\small Box 354350\\ \small Seattle, WA, 98195\\
\footnotesize gunther@math.washington.edu}
 \date{}
\begin{document}
\maketitle
\begin{abstract}We consider the linearization of the
  Dirichlet-to-Neumann (DN) map as a function of the potential. We
  show that it is injective at a real analytic potential for
  measurements made at an open subset of analyticity of the
  boundary. More generally, we relate the analyticity up to the
  boundary of the variations of the potential to the analyticity of
  the symbols of the corresponding variations of the DN-map.

\medskip\centerline{\bf R\'esum\'e}
Nous consid\'erons la lin\'earis\'ee de
l'application de Dirichlet--Neumann (DN) comme fonction du potentiel
en un point donn\'e par un potentiel analytique.  Nous montrons
qu'elle est injective pour des m\'esures faites dans un ouvert o\`u le
bord est analytique. Plus g\'en\'eralement, nous lions l'analyticit\'e
jusqu'au bord des variations infinit\'esimales du potentiel \`a celle
des symboles des variations correspondantes le l'application DN.
\end{abstract}
\medskip 

\tableofcontents

\section{Introduction}\label{int}
\setcounter{equation}{0}

In this paper we consider the {\sl linearized} Calder\'on problem with local partial data and related problems. We review first briefly Calder\'on's problem including the case of partial data. For a more complete review see \cite{U}.

Calder\'on's problem is, roughly speaking, the question of whether one can determine the electrical
conductivity of a medium by making voltage and current measurements at the boundary of the medium. 
This inverse method is also called Electrical Impedance Tomography (EIT).
We describe the problem more precisely below.

Let $\Omega\subseteq\mathbb{R}^n$ be a bounded domain with smooth
boundary.
The electrical conductivity of $\Omega$
is represented by a bounded and positive function $\gamma(x)$. In
the absence of sinks or sources of current  the equation for the
potential is given by
\begin{equation}\label{eq:0.1}
\nabla\cdot(\gamma\nabla u)=0\hbox{ in }\Omega
\end{equation}
since, by Ohm's law, $\gamma\nabla u$ represents the current flux.
Given a potential $f\in H^{\frac{1}{2}}(\partial\Omega)$ on the
boundary the induced potential $u\in H^1(\Omega)$ solves the
Dirichlet problem
\begin{equation}\label{eq:0.2}
\begin{array}{rcl}
\nabla\cdot(\gamma\nabla u) & = & 0 \hbox{ in }\Omega, \\u\big|_{\partial\Omega} & = & f.
\end{array}
\end{equation}
The Dirichlet to Neumann (DN) map, or voltage to current map, is given by
\begin{equation}\label{eq:0.3}
\Lambda_\gamma(f)=\left(\gamma\frac{\partial u}{\partial \nu
}\right)\Big|_{\partial\Omega}
\end{equation}
where $\nu$ denotes the unit outer normal to $\partial\Omega$.
The inverse problem is to determine $\gamma$ knowing
$\Lambda_\gamma$. 

The local Calder\'on problem, or the Calder\'on problem with partial
data, is the question of whether one can determine the conductivity by
measuring the DN map on subsets of the boundary for voltages supported
in subsets of the boundary. In this paper we consider the case when
the support of the voltages and the induced current fluxes are
measured in the same open subset $\Gamma$.  More conditions on this
open set will be stated later.  If $\gamma\in
C^\infty(\overline\Omega)$ the DN map is a classical
pseudodifferential operator of order 1. It was shown in \cite{SU1}
that its full symbol computed in boundary normal coordinates near a
point of $\Gamma$ determines the Taylor series of $\gamma$ at the
point giving another proof of the result of Kohn and Vogelius
\cite{KV}. In particular this shows that real-analytic conductivities
can be determined by the local DN map. This result was generalized in
\cite{LU} to the case of anisotropic conductivities using a
factorization method related to the methods of this paper.  Interior
determination was shown in dimension $n\ge 3$ for $C^2$ conductivities
\cite{SU2}. This was extended to $C^1$ conductivities in \cite{HT}. In
two dimensions uniqueness was proven for $C^2$ conductivities in
\cite{N1} and for merely $L^\infty$ conductivities in \cite{AP}.  The
case of partial data in dimension $n\ge 3$ was considered in
\cite{BU}, \cite{KSU}, \cite{Is}, \cite{KS1}, \cite{IY}. The two
dimensional case was solved in \cite{IUY}. See \cite{KS2} for a
review. However it is not known at the present whether one can
determine uniquely the conductivity if one measures the DN map on an
arbitrarily open subset of the boundary applied to functions supported
in the same set. We refer to these type of measurements as the local
DN map.

The $\gamma\rightarrow \Lambda_\gamma$ is not linear. In this paper we
consider the linearization of the partial data problem at a
real-analytic conductivity for real-analytic $\Gamma$. We prove that
the linearized map is injective. In fact we prove a more general
statement, {see Theorem \ref{int3}.}

As in many works on Calder\'on's problem one can reduce the problem to
a similar one for the Schr\"odinger equation (see for instance
\cite{U}). This result uses that one can determine from the DN map the
conductivity and the normal derivative of the conductivity. This
result is only valid for the local DN map.  One can then consider the
more general problem of determining a potential from the corresponding
DN map.  The same is valid for the case of partial data and the
linearization.  It was shown in \cite{DKSU} that the linearization of
the local DN map at the $0$ potential is injective. We consider the
linearization of the local DN map at any real analytic potential
assuming that the local DN map is measured an an open real-analytic
set.  We now describe more precisely our results in this setting.

Consider the Schr\"odinger operator $P=\Delta -V$ on the open set $\Omega \Subset
{\bf R}^n$ where the boundary $\partial \Omega $ is smooth (and later
assumed to be analytic in the most interesting region). Assume that
$0$ is not in the spectrum of the Dirichlet realization of $P$. Let
$G$ and $K$ denote the corresponding Green and Poisson operators. Let
$\gamma :C^\infty (\overline{\Omega })\to C^\infty (\partial \Omega )$
be the restriction operator, $\nu $ the exterior normal. If
$x_0\in \partial \Omega $, we can choose local coordinates
$y=(y_1,...,y_n)$, centered at $x_0$ so that $\Omega $ is given by
$y_n>0$, $\partial _\nu
=-\partial _{y_n}$. If $\partial \Omega $ is analytic near $x_0$, we
can choose the coordinates to be analytic. 

\par The Dirichlet
to Neumann (DN) operator is 
\begin{equation}\label{int.1}
{\Lambda}={\gamma \partial _\nu K}.
\end{equation}

Consider a smooth deformation of smooth potentials, possibly
complex-valued,
\begin{equation}\label{int.2}\begin{split}
&\mathrm{neigh\,}(0,{\bf R})\ni t\mapsto P_t=\Delta -V_t,\\
&V_t(x)=V(t,x)\in C^\infty (\mathrm{neigh\,}(0,{\bf R})\times
\overline{\Omega };{\bf R}).
\end{split}
\end{equation}
Let $G_t$, $K_t$ be the Green and Poisson kernels for $P_t$, so that 
$$
\begin{pmatrix}P_t\\ \gamma \end{pmatrix}:\ C^\infty (\overline{\Omega
})\to C^\infty (\overline{\Omega
})\times C^\infty (\partial \Omega )
$$
has the inverse 
$$
\begin{pmatrix}G_t &K_t\end{pmatrix}.
$$
Then, denoting $t$-derivatives by dots,
\[
\begin{pmatrix}\dot{G}_t & \dot{K}_t \end{pmatrix}=
-\begin{pmatrix}G_t &K_t\end{pmatrix} \begin{pmatrix}\dot{P}_t\\
  0\end{pmatrix}
\begin{pmatrix}G_t &K_t\end{pmatrix}=-\begin{pmatrix}G_t\dot{P}_tG_t & G_t\dot{P}_tK_t\end{pmatrix}
\]
that is,
\begin{equation}\label{int.3}
\dot{G}=-G\dot{P}G,\quad \dot{K}=-G\dot{P}K,
\end{equation}
and consequently,
\begin{equation}\label{int.4}
\dot{{\Lambda}}=-\gamma \partial _\nu G\dot{P}K.
\end{equation}
Using the Green formula, we see that 
\begin{equation}\label{int.5}
\gamma \partial _\nu G=K^\mathrm{t},
\end{equation}
where $K^\mathrm{t}$ denotes the transposed operator.

In fact, write the Green formula,
$$
\int_\Omega ((Pu_1)u_2-u_1Pu_2)dx=\int_{\partial \Omega
}(\partial_\nu u_1 u_2-u_1\partial _\nu u_2)S(dx) 
$$
and put $u_1=Gv$, $u_2=Kw$ for $v\in C^\infty (\overline{\Omega })$,
$w\in C^\infty (\partial \Omega )$
$$
\int_\Omega vKwdx=\int_{\partial \Omega }(\gamma \partial _\nu Gv) wS(dx)
$$
and (\ref{int.5}) follows.

(\ref{int.4}) becomes
\begin{equation}\label{int.6}
\dot{{\Lambda}}=-K^\mathrm{t}\dot{P}K=K^\mathrm{t}\dot{V}K.
\end{equation}

\par The linearized Calder\'on problem is the following: If $V_t=V+tq$,
determine $q$ from $\dot{{\Lambda}}_{t=0}$. The corresponding partial
data problem is to recover $q$ or some information about $q$ from
local information about $\dot{{\Lambda}}_{t=0}$. From now on, we restrict the
attention to $t=0$. In this paper we shall study the following linearized
baby problem: Assume that $V$ and $\partial \Omega $ are analytic near
some point $x_0\in \partial \Omega $. We also assume that $V$ is smooth.  If $\dot{{\Lambda}}$ (for $t=0$) is an analytic
pseudodifferential operator near $x_0$, can we conclude that $q$ is
analytic near $x_0$?  Here,
\begin{equation}\label{int.7}
\dot{{\Lambda}}=K^\mathrm{t}qK
\end{equation}
and we shall view the right hand side as a Fourier integral operator
acting on $q$. 

Actually this problem is overdetermined in the sense that the symbol
of a pseudodifferential operator on the boundary is a function of
$2(n-1)$ variables, while $q$ is a function of $n$ variables and we
have $2(n-1)\ge n$ for $n\ge 2$ with equality precisely for $n=2$. In
order to have a non-overdetermined problem we shall only consider the
symbol $\sigma _{\dot{{\Lambda}}}(y',\eta ' )$ of $\dot{{\Lambda}}$ along
a half-ray in $\eta ' $, i.e. we look at $\sigma _{\dot{{\Lambda
    }}}(y',t \eta _0' )$ for some fixed $\eta _0'\ne 0$ and for some
local coordinates as above. Assuming this restricted symbol to be a
classical analytic symbol near $y'=0$ and the potential $V=V_0$ to be
analytic near $y=0$ (i.e. near $x_0$), we shall show that $q$ is
real-analytic up to the boundary near $x_0$ (corresponding to $y=0$).

To formulate the result more precisely, we first make some remarks
about the analytic singular support of the Schwartz kernels  of $K$
and $K^\mathrm{t}qK$, then we recall the notion of classical analytic
pseudo\-differential operators. Assume that $W\subset {\bf R}^n$ is an
open neighborhood of $x_0\in \partial {\cal O}$ and that 
\begin{equation}\label{int.8}\partial
  \Omega \hbox{ and } V \hbox{ are analytic in }W. \end{equation}
For simplicity we shall use the same symbol to denote operators and
their Schwartz kernels. 
Then we have
\begin{lemma}\label{int1} The Schwartz kernel
$K(x,y')$ is analytic with respect to $y'$, locally uniformly on the set
$$
\{(x,y')\in \overline{\Omega }\times (\partial \Omega \cap W);\, x\ne y' \}.
$$
\end{lemma}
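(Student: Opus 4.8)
The plan is to exploit that $K$ is the Poisson operator for the elliptic operator $P=\Delta-V$ with analytic coefficients near $\partial\Omega\cap W$, and that $u=Kf$ is the unique solution of $Pu=0$ in $\Omega$, $u|_{\partial\Omega}=f$. First I would localize: fix $x\in\overline\Omega$ and $y_0'\in\partial\Omega\cap W$ with $x\neq y_0'$, and choose a small neighborhood $V'\Subset W$ of $y_0'$ in $\partial\Omega$ with $x\notin\overline{V'}$. The claim is that $y'\mapsto K(x,y')$ is analytic for $y'\in V'$, locally uniformly in $x$. The point is that the Schwartz kernel $K(x,y')$, as a function of $x$, solves the homogeneous equation $P_xK(x,y')=0$ in $\Omega$ and has boundary value $\delta_{y'}$ on $\partial\Omega$; away from the diagonal it is a smooth function of $x$ up to the boundary (away from $y'$), by elliptic regularity, so the issue is purely the joint/analytic dependence on the parameter $y'$.

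The main step is an analytic-microlocal or complex-deformation argument. I would represent $K(x,y')$ near $y'=y_0'$ by peeling off the dependence through the boundary: using boundary normal analytic coordinates (available by the analyticity hypothesis (\ref{int.8})), write $K$ modulo an analytic-regularizing kernel as an oscillatory integral / analytic Fourier integral operator in the cotangent direction, whose phase and amplitude are analytic. Concretely, the operator $P$ being analytic-elliptic, one constructs an analytic parametrix for the Dirichlet problem in a collar neighborhood $\{0<y_n<\varepsilon\}$: this gives $K(x,y') = \int e^{i\varphi(y,\eta')\cdot}a(y,\eta';h)\,d\eta'$ (boundary point $y_0'$, analytic phase $\varphi$ with $\mathrm{Im}\,\varphi\ge c\,y_n|\eta'|$, classical analytic symbol $a$), plus a kernel that is analytic in $(x,y')$ jointly. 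Standard stationary-phase / Kuranishi-type estimates in the analytic category then show that for $x$ fixed with $x_n>0$, the $y'$-integral converges and is analytic in $y'$; and when $x\in\partial\Omega$ but $x\neq y'$, the non-stationarity of the phase (since $x\neq y'$) lets one deform the $\eta'$-contour into the complex domain to gain exponential decay, yielding analyticity in $y'$ uniformly for $x$ in a compact set bounded away from $y'$.

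An alternative, perhaps cleaner route: since $Pu=0$ with $u=Kf$, and $P$ has analytic coefficients up to $\partial\Omega\cap W$, apply analytic elliptic regularity up to the boundary (Morrey–Nirenberg) together with the standard fact that the Poisson kernel depends analytically on the boundary data point wherever the two are separated. Precisely, for $x$ in a fixed compact $L\subset\overline\Omega$ with $x\neq y_0'$, the function $y'\mapsto K(x,y')$ can be obtained as $K(x,y') = \langle \delta_{y'}, K(x,\cdot)^{\mathrm t}\rangle$ where $K(x,\cdot)^{\mathrm t}$ solves the adjoint boundary problem and is analytic near $y_0'$ (again by analytic elliptic regularity, since the source $\delta_x$ is supported away from $V'$); evaluating a function analytic in a neighborhood of $y_0'\in\partial\Omega$ at the point $y'$ depends analytically on $y'$, with bounds locally uniform in $x$. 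I expect the main obstacle to be the behavior when $x\in\partial\Omega$ (the boundary-to-boundary kernel): one must handle the $h$-dependent analytic symbol estimates and the contour deformation carefully, controlling the Borel-sum remainder $O(e^{-1/Ch})$ uniformly as $x$ ranges over a compact set avoiding the diagonal — this is exactly the kind of analytic-pseudodifferential bookkeeping that the rest of the paper sets up, so I would lean on the analytic symbol calculus rather than reprove it.
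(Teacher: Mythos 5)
Your ``alternative, perhaps cleaner route'' is precisely the paper's proof, while your primary route (constructing an analytic FIO parametrix for $K$ in a collar and deforming the $\eta'$-contour) is substantial overkill for this lemma --- that is the machinery the paper builds only later, in Section~\ref{fop}, for the quantitative analysis. Keep the alternative and discard the rest. To make it precise, invoke (\ref{int.5}) explicitly: writing $K^{\mathrm t}=\gamma\partial_\nu G$, one has $K(x,y')=\gamma\partial_\nu u(y')$ where $u=G(x,\cdot)$ solves $(\Delta-V)u=\delta(\cdot-x)$, $\gamma u=0$. For $y'$ ranging over a compact piece of $\partial\Omega\cap W$ bounded away from $x$, the source $\delta(\cdot-x)$ vanishes near $y'$, and Morrey--Nirenberg analytic elliptic regularity up to the (analytic, by (\ref{int.8})) boundary yields analyticity of $u$, hence of $\gamma\partial_\nu u$, near $y'$, with the locally uniform bounds supplied by the elliptic estimates. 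You flag but do not resolve the case $x\in\partial\Omega$; the paper handles it by defining $G(x,y)$ there (away from $y=x$) as $\lim_{\Omega\ni x_j\to x}G(x_j,y)$, which is compatible with the same estimates. One wording caveat: your parenthetical ``the standard fact that the Poisson kernel depends analytically on the boundary data point wherever the two are separated'' is circular as written --- it restates the lemma rather than providing an input --- so drop it and rely only on the Green-function reduction together with analytic elliptic regularity.
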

\begin{proof}
Using (\ref{int.5}) we can write $K(x,y')=\gamma \partial _{\nu
}u(y')$, where $u=G(x,.)$ solves the Dirichlet problem 
$$
(\Delta -V) u=\delta (\cdot -x),\ \gamma u=0,
$$
and from analytic regularity for elliptic boundary value problems, we
get the lemma. (When $x\in \partial \Omega $, we view $G(x,y)$ away
from $y=x$ as the limit of $G(x_j,y)$ when $\Omega \ni x_j\to x$.)
\end{proof}

We next define the notion of symbol up to exponentially small
contributions. For that purpose we assume that $X$ is an analytic
manifold and consider an operator
\begin{equation}\label{int.10}A:\, C_0^\infty (X)\to C^\infty
  (X)\end{equation}
which is also continuous
\begin{equation}\label{int.11} {\cal E}'(X)\to {\cal
    D}'(X).\end{equation}
Assume (as we have verified for $K^\mathrm{t}qK$ with $n$ replaced by
$n-1$ and with $X=\partial \Omega \cap W$) that the distribution
kernel $A(x,y)$ is analytic away from the diagonal. After restriction
to a local analytic coordinate chart, we may assume that $X\subset
{\bf R}^n$ is an open set. The symbol of $A$ is formally given on $T^*X$ by
$$
\sigma _A(x,\xi )=e^{-ix\cdot \xi }A(e^{i(\cdot )\cdot \xi })=\int e^{-i(x-y)\cdot \xi }A(x,y)dy.
$$
In the usual case of $C^\infty $-theory, we give a meaning to this
symbol up to ${\cal O}(\langle \xi \rangle ^{-\infty })$ by
introducing a cutoff $\chi (x,y)\in C^\infty (X\times X)$ which is
properly supported and equal to 1 near the diagonal. In the analytic
category we would like to have an exponentially small undeterminacy,
and the use of special cut-offs becoming more complicated, we prefer
to make a contour deformation.

\par For $x$ in a compact subset of $X$, let $r>0$ be small enough and
define for $\xi \ne 0$,
\begin{equation}\label{int.12}
\sigma _A(x,\xi )=\int_{x+\Gamma _{r,\xi }}e^{i(y-x)\cdot \xi }A(x,y)dy,
\end{equation}
where 
$$
\Gamma _{r,\xi }:B(0,r)\ni t\mapsto t+i\chi
\left(\frac{t}{r}\right)r\frac{\xi }{|\xi |}\in {\bf C}^n
$$
and $\chi \in C^\infty (B(0,1);[0,1])$ is a radial function which
vanishes on $B(0,1/2)$ and is equal to 1 near $\partial B(0,1)$. Thus
the contour $x+\Gamma _{r,\xi } $ coincides with ${\bf R}^n$ near
$y=x$ and becomes complex for $t$ close to the boundary of $B(0,r)$.
Along this contour,
$$
|e^{i(y-x)\cdot \xi }|=e^{-\chi (t/r)r|\xi |}
$$
is bounded by 1 and for $t$ close to $\partial B(0,r)$ it is
exponentially decaying in $|\xi|$. Thus from Stokes' formula it is
clear that $\sigma _A(x,\xi )$ will change only by an exponentially
small term if we modify $r$. More generally, for $(x,\xi )$ in a conic
neighborhood of a fixed point $(x_0,\xi _0)\in X\times S^{n-1}$ we change
$\sigma _A(x,\xi )$ only by an exponentially small term if we replace
the contour in (\ref{int.12}) by $x_0+\Gamma _{r,\xi _0}$ and we then
get a function which has a holomorphic extension to a conic
neighborhood of $(x_0,\xi _0)$ in ${\bf C}^n\times ({\bf C}^n\setminus
\{0 \})$.  
\begin{remark}\label{int2.7}
Instead of using contour deformation  to define $\sigma _A$, we can
use an almost analytic cut-off in the following way. Choose $C>0$ so
that
$$
1=\int Ch^{-\frac{n}{2}}e^{-\frac{(y-t)^2}{2h}}dt,
$$
and put 
$$
e_t(y)=\widetilde{\chi }(y-t)Ch^{-\frac{n}{2}}e^{-\frac{(y-t)^2}{2h}},$$
where $\widetilde{\chi }\in C_0^\infty ({\bf R}^n)$ is equal to 1 near
$0$ and has its support in a small neighborhood of that point. Then if
$\chi $ is another cut-off of the same type, we see by
contour deformation that 
$$\sigma _A(x,\xi )=e^{-ix\cdot \xi }A(\int \chi
(t-x)e_te^{i(\cdot )\cdot \xi }dt)$$
up to an exponentially decreasing term.
\end{remark}

In the following definition we recall the notion of analytic symbols
in the sense of L.~Boutet de Monvel and P.~Kr\'ee \cite{BoKr67}. We
will avoid to use the corresponding notion of analytic
pseudodifferential operator, since it involves some special facts
about the distribution kernel of the operator that will not be needed.
\begin{dref}\label{int2.8}  We say that $\sigma _A$ is a classical
analytic symbol (cl.a.s.) of order $m$ on $X\times {\bf R}^n$ if the
following holds:

\par There exist holomorphic functions $p_{m-j}(x,\xi )$ on a fixed complex
conic neighborhood $V$ of $X\times \dot{{\bf R}}^n$ such that
\begin{equation}\label{int.13}
p_k(x,\xi ) \hbox{ is positively homogeneous of degree }k\hbox{ in
}\xi ,
\end{equation}
\begin{equation}\label{int.14}\begin{split}
&\forall K\Subset V\cap \{(x,\xi );\, |\xi |=1 \},\ \exists \,
C=C_K\hbox{ such that}\\
&|p_{m-j}(x,\xi )|\le C^{j+1}j^j,\hbox{ on }K.
\end{split}
\end{equation}
\begin{equation}\label{int.15}\begin{split}
&\forall K\Subset X,\hbox{ and every }C_1>0,\hbox{ large enough,
}\exists C_2>0,\\
&\hbox{such that }|\sigma _A(x,\xi )-\sum_{0\le j\le |\xi |/C_1}p_{m-j}(x,\xi )|\le C_2
e^{-|\xi |/C_2},\\ &(x,\xi )\in K\times {\bf R}^n,\ |\xi |\ge 1.
\end{split}
\end{equation}
The formal sum $\sum_0^\infty p_{m-j}(x,\xi )$ is called a formal
cl.a.s. when (\ref{int.13}), (\ref{int.14}) hold. We define
cl.a.s. and formal cl.a.s. on open conic subsets of $X\times \dot{{\bf
  R}}^n$ and on other similar sets by the obvious modifications of the
above definitions. If $p(x,\xi )$ is a cl.a.s. on $X\times {\bf R}^n$
and if $\xi _0\in \dot{{\bf R}}^n$, then
$$
q(x,\tau ):=p(x,\tau \xi _0)
$$
is a cl.a.s. on $X\times {\bf R}_+$. \end{dref}

\par The main result of this work is
\begin{theo}\label{int3}
  Let $x_0\in \partial \Omega $ and assume that $\partial \Omega $ and
  $V$ are analytic near that point. Let $q\in L^\infty(\Omega )$. Choose local analytic coordinates $y'=(y_1,...,y_{n-1})$
  on $\mathrm{neigh\,}(x_0,\partial \Omega )$, centered at $x_0$.

\par If { $\dot{\Lambda }(x',y')$ is analytic in $W'\times W'\setminus
\mathrm{diag\,}(W'\times W')$, where $W'$ is a neighborhood of $0$ in
${\bf R}^n$  and} $\sigma _{\dot{{\Lambda}}}(y',\tau \eta _0')$ is a cl.a.s. on
$\mathrm{neigh\,}(0,{\bf R}^{n-1})\times {\bf R}_+$, then $q$ is
analytic up to the boundary in a neighborhood of
$x_0$. {Here $\dot{\Lambda }(x',y')$ denotes the
  distribution kernel of $\dot{\Lambda }$.}
\end{theo}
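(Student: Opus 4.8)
\medskip\noindent{\bf Proof strategy.}
The plan is to represent the Poisson operator $K$ near $x_0$ by an analytic Fourier integral operator, to insert this representation into $\dot\Lambda=K^{\mathrm t}qK$ so as to write $\sigma_{\dot\Lambda}(y',\tau\eta_0')$ as an oscillatory integral with large parameter $\tau$ in which $q$ occurs in the amplitude, and then to recognize this integral, up to an elliptic classical analytic factor and an exponentially small error, as a faithful Laplace--FBI type transform of $q$ localized in a $1/\tau$--neighbourhood of $x_0$; the hypothesis that it be a cl.a.s.\ then forces $q$ to be analytic up to the boundary. Concretely, in the analytic boundary coordinates $y=(y',y_n)$, $\Omega=\{y_n>0\}$, one first solves the eikonal equation for $\Delta-V$ with boundary value $y'\cdot\eta'$, producing an analytic phase $\phi(y,\eta')$, positively homogeneous of degree $1$ in $\eta'$, with $\mathrm{Im}\,\phi(y,\eta')$ comparable to $y_n|\eta'|$ for $y$ near $x_0$; solving the transport equations then gives an analytic symbol $a(y,\eta')$ such that $Ku(y)\equiv\frac{1}{(2\pi)^{n-1}}\int e^{i\phi(y,\eta')}a(y,\eta')\widehat u(\eta')\,d\eta'$ microlocally near $x_0$, the difference having an analytic Schwartz kernel. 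This is where analyticity of $\partial\Omega$ and $V$ is used, together with Lemma~\ref{int1} and analytic elliptic boundary regularity to dispose of the error, and with the assumed off-diagonal analyticity of $\dot\Lambda(x',y')$ to make $\sigma_{\dot\Lambda}$ well defined by the contour deformation (\ref{int.12}). Since $K(e^{i(\cdot)\cdot\xi'})(z)\equiv e^{i\phi(z,\xi')}a(z,\xi')$ and $\sigma_{\dot\Lambda}(y',\xi')=e^{-iy'\cdot\xi'}\bigl(K^{\mathrm t}qK(e^{i(\cdot)\cdot\xi'})\bigr)(y')$, this yields, modulo an exponentially small term and after using homogeneity to set $\xi'=\tau\eta_0'$,
\[
\sigma_{\dot\Lambda}(y',\tau\eta_0')\;\equiv\;e^{-i\tau\,y'\cdot\eta_0'}\int_{\Omega}K(z,y')\,q(z)\,e^{i\tau\phi(z,\eta_0')}\,a(z,\tau\eta_0')\,dz ,
\]
where $K(z,y')$ is the Poisson kernel, analytic for $z\ne y'$ by Lemma~\ref{int1}.

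The heart of the argument is the asymptotics of this integral as $\tau\to+\infty$. Because $|e^{i\tau\phi(z,\eta_0')}|=e^{-\tau\,\mathrm{Im}\,\phi(z,\eta_0')}$ decays like $e^{-c\tau z_n}$, the integrand concentrates in the layer $z_n\lesssim 1/\tau$; writing $z=(z',z_n)$ and using that, as $z_n\downarrow0$, the Poisson kernel $z'\mapsto K(z',z_n;y')$ behaves as an approximate identity concentrated at $z'=y'$ (or, equivalently, replacing it too by its WKB form and reducing the tangential and frequency variables by a nondegenerate complex analytic stationary phase, whose Hessian on $z_n=0$ is the standard block matrix), the $z'$--integration reproduces the actual boundary values at $(y',0)$ of $q$ and of its normal derivatives, rather than merely an $\eta_0'$--Fourier component of them; this is where the ``non-overdetermined'' matching of dimensions is essential. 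One is thus left, modulo exponentially small errors and multiplication by an elliptic cl.a.s., with a genuine Laplace integral $\int_0^\infty e^{-\tau z_n|\eta_0'|}\,h(y',z_n;\tau)\,dz_n$, with $h$ built from $q$, the analytic WKB data, and the Poisson kernel, i.e.\ with an analytic Laplace--FBI transform $\mathcal T q$ of $q$ localized near $x_0$. Its full symbol expansion is then a triangular system $p_{-1-j}(y',\tau\eta_0')=\tau^{-1-j}\,\tilde p_j(y')$, where $\tilde p_j$ is an analytic-in-$y'$ combination of the $\partial_{z_n}^{\,k}q(y',0)$, $k\le j$, and $\tilde p_0$ is a nonvanishing multiple of $q(y',0)$.

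To finish: the hypothesis that $\sigma_{\dot\Lambda}(y',\tau\eta_0')$ is a cl.a.s.\ (Definition~\ref{int2.8}) means, via the Borel/Laplace characterization of analytic symbols of Boutet de Monvel and Kr\'ee, exactly that each $\tilde p_j$ extends holomorphically to a fixed complex neighbourhood of $0$ in ${\bf C}^{n-1}$ with $|\tilde p_j|\le C^{j+1}j^j$ there, and that the remainder is $O(e^{-\tau/C})$. Solving the triangular system of the previous paragraph recursively, one obtains that every $\partial_{z_n}^{\,j}q(\cdot,0)$ extends holomorphically near $0$ with bounds of the form $C^{j+1}j!$ (absorbing the factors $j^j\lesssim j!\,e^j$), so that the formal Taylor expansion of $q$ in $z_n$ with these analytic coefficients converges to a function holomorphic near $x_0$ in ${\bf C}^n$; since this function agrees with $q$ on a one-sided neighbourhood of $x_0$, $q$ is real-analytic up to the boundary near $x_0$. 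Equivalently, one may phrase the last step as inverting $\mathcal T$: it is an elliptic analytic Fourier integral operator near $x_0$, hence has a left parametrix in the analytic calculus that maps classical analytic symbols on $\mathrm{neigh}(0,{\bf R}^{n-1})\times{\bf R}_+$ to functions analytic up to the boundary near $x_0$, and then $q=\mathcal T^{-1}\sigma_{\dot\Lambda}(\cdot,\tau\eta_0')$ modulo an analytic function.

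I expect the principal obstacles to be threefold. First, carrying out the WKB construction of $K$ with analytic symbol estimates and, above all, disposing of the parametrix error in the exponentially small sense \emph{uniformly up to the boundary}: this is analytic microlocal analysis for an elliptic boundary value problem, and the boundary uniformity is delicate. Second, the low regularity of $q$ (only $L^\infty$): one cannot Taylor-expand $q$ inside the oscillatory integral, so the reduction must be arranged so that $q$ is only ever paired with an explicit, analytically controlled approximate-identity kernel and never placed in a stationary-phase amplitude, and one must argue separately that the cl.a.s.\ hypothesis already forces enough smoothness of $q$ near the boundary for the symbol expansion to be meaningful. Third, making rigorous the claim that tangential Fourier localization along the single half-ray $\eta_0'$, combined with the normal Laplace structure, recovers $q$ itself and not merely directional microlocal information — it is the approximate-identity property of the Poisson kernel in the tangential variables that makes this work.
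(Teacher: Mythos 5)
Your opening moves match the paper's: the analytic WKB parametrix for $K$ with phase $\phi(y,\eta')$ solving the eikonal equation (\ref{fop.3})--(\ref{fop.4}), the representation of $\sigma_{\dot\Lambda}(y',\tau\eta_0')$ as an oscillatory integral with $q$ in the amplitude, and the recognition that the dominant $y_n$-behaviour is that of a Laplace transform are exactly the content of Sections \ref{la}--\ref{fop}, and your third ``obstacle'' (recovering $q$ and not just a directional microlocal slice) is handled in the paper by Proposition \ref{fop1}, which shows that the underlying canonical relation $\kappa_{\mathrm{new}}$ is a bijective canonical transformation from a neighbourhood of the complex covector $(0;0,-i)$. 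Up to this point the proposal and the paper agree in spirit.

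The gap is in the end-game, precisely where you flag an obstacle but do not resolve it. Your ``triangular system'' $p_{-1-j}(y',\tau\eta_0')=\tau^{-1-j}\tilde p_j(y')$ with $\tilde p_j$ built from $\partial_{z_n}^k q(y',0)$, $k\le j$, is not available: $q\in L^\infty(\Omega)$ has no boundary traces, let alone normal derivatives, so there is no system to solve. Your suggested fix — ``argue that the cl.a.s.\ hypothesis already forces enough smoothness of $q$'' — is circular, since that is what must be proved. The paper avoids differentiating $q$ altogether: from the hypothesis it constructs a formal cl.a.s.\ $v\sim\sum_1^\infty v_k h^k$ and derives the compatibility equation $(h\partial_h+z_n\partial_{z_n})v=0$ (equation (\ref{ep.21}), the semiclassical Laplace relation of Example \ref{ep3}), which forces $v_k(z)=q_k(z')z_n^{-k}$ with $|q_k|\le C^{k+1}k^k$. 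Borel-summing these \emph{symbolic} data produces an explicit analytic candidate $\widetilde q=1_{[0,a]}(y_n)\sum_0^\infty q_{k+1}(y')y_n^k/k!$, (\ref{ep.23}); the remaining problem is the uniqueness statement $q=\widetilde q$ for the merely bounded difference $r=q-\widetilde q$, and nothing in your proposal addresses it.

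Your alternative — invert $\mathcal T$ as an elliptic analytic FIO — also fails to close the argument, because the inversion takes place at the \emph{complex} microlocal point $(0;0,-i)$, whose image under $\kappa_T$ lies where $\Phi_1=\Phi_1^{\mathrm{ext}}<\Phi_0$. An analytic parametrix for $\widehat T M S$ only yields $Tr\equiv 0$ in $H_{\Phi_1^{\mathrm{ext}}}$ at that point ((\ref{ep.27})--(\ref{ep.28})), which is information at a complex normal frequency, not analyticity of $r$ near $y=0$. The decisive step in the paper, absent from your proposal, is the propagation (\ref{ep.29})--(\ref{ep.31}): $\Lambda_{\Phi_1}$ is foliated by complex half-lines on which $\Phi_1$ is harmonic, $\overline\Gamma=\overline{\kappa_T(\{(0;0,\eta_n):\Im\eta_n<0\})}$ is one of them, and the maximum principle transports the vanishing of $Tr$ relative to $\Phi_1$ from $\pi_z\kappa_T(0;0,-i)$ to the endpoint $z=0$ where $\Phi_1=\Phi_0$. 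Only then does one obtain $Tr\equiv 0$ in $H_{\Phi_0,0}$, hence $r=0$ near $y=0$. Without this maximum-principle step, the whole construction delivers a statement about the analytic wave front set of $q$ at a complex conormal direction, not analyticity of $q$ up to the boundary.
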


\par We have a simpler direct result.
\begin{prop}\label{int4}
Let $x_0$, $\partial \Omega $, $V$ be as in Theorem \ref{int3} and
choose analytic coordinates as there. If $q\in L^\infty (\Omega )$ is
analytic up to the boundary near $x_0$, then { there
  exists a neighborhood $W'$ of $0$ in ${\bf R}^{n-1}$ such that
  $\dot{\Lambda }(x',y')$ is analytic in $W'\times W'\setminus
  \mathrm{diag\,}(W'\times W')$ and} $\sigma
_{\dot{{\Lambda}}}$ is a cl.a.s. near $y'=0$. 
\end{prop}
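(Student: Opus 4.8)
The plan is to establish separately the two assertions of the proposition: (i) that $\dot\Lambda(x',y')$ is analytic off the diagonal on a neighbourhood of $(0,0)$ in ${\bf R}^{n-1}\times{\bf R}^{n-1}$, and (ii) that the symbol $\sigma_{\dot\Lambda}$, defined by the contour integral (\ref{int.12}), is a classical analytic symbol of order $-1$ near $y'=0$.

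\emph{Part (i).} From (\ref{int.7}) and (\ref{int.5}), $\dot\Lambda f=\gamma\partial_\nu u$, where $u=G(qKf)$ solves $(\Delta-V)u=qKf$ in $\Omega$ with $\gamma u=0$. Taking for $f$ the point mass at $y'\in\partial\Omega$ gives $\dot\Lambda(x',y')=\gamma\partial_\nu u(x';y')$, where $(\Delta-V)u(\cdot;y')=q\,K(\cdot,y')$ in $\Omega$, $\gamma u(\cdot;y')=0$, and $u(\cdot;y')$ is well defined with some a priori Sobolev regularity since $q\,K(\cdot,y')\in L^p(\Omega)$ for some $p>1$. Now fix $(x'_*,y'_*)$ with $x'_*\ne y'_*$, both near $x_0$. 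For $x$ near $x'_*$ and $y'$ near $y'_*$ the right-hand side $q(x)K(x,y')$ is analytic jointly in $(x,y')$ up to the boundary: $q$ is analytic near $x_0$ by hypothesis, and $K(x,y')$ is analytic there by Lemma \ref{int1} because $x$ stays away from $y'$. Since $\partial\Omega$ and $V$ are analytic near $x_0$, analytic regularity for the Dirichlet problem up to the analytic boundary (Morrey--Nirenberg, with analytic dependence on the parameter $y'$) shows that $u(\cdot;y')$ is analytic up to the boundary near $x'_*$, jointly in $(x,y')$, and hence so is $\dot\Lambda(x',y')=\gamma\partial_\nu u(x';y')$. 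As $(x'_*,y'_*)$ was arbitrary off the diagonal, (i) follows on $W'\times W'\setminus\mathrm{diag\,}(W'\times W')$ for a suitable neighbourhood $W'$ of $0$.

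\emph{Part (ii).} Work in analytic boundary normal coordinates near $x_0$, so that $\Omega=\{y_n>0\}$ and $\partial_\nu=-\partial_{y_n}$, and first construct an analytic parametrix for the Poisson operator: modulo an operator whose kernel is analytic up to $\partial\Omega$ near $x_0$, $Kf$ is given by an analytic Fourier integral operator of Poisson type,
$$
Kf(x'',x_n)\equiv\frac{1}{(2\pi)^{n-1}}\iint e^{\,i(x''-z')\cdot\eta'-x_n\varphi(x'',\eta')}\,a(x'',x_n,\eta')\,f(z')\,dz'\,d\eta',
$$
where $\varphi$ is a formal classical analytic symbol of order $1$ solving the eikonal equation attached to $\Delta-V$ with $\operatorname{Re}\varphi\asymp|\eta'|$, and $a$ is a formal classical analytic symbol of order $0$ solving the associated transport equations; this is the analytic symbol calculus of Boutet de Monvel and Kr\'ee \cite{BoKr67}. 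By (\ref{int.5}), $K^{\mathrm t}(x',(x'',x_n))=K((x'',x_n),x')$ has the same structure, with phase $i(x''-x')\cdot\zeta'-x_n\widetilde\varphi(x'',\zeta')$ and amplitude $\widetilde a$. Inserting these into
$$
\dot\Lambda(x',y')=\int_{x_n>0}\!\int K^{\mathrm t}(x',(x'',x_n))\,q(x'',x_n)\,K((x'',x_n),y')\,dx''\,dx_n,
$$
one computes $\sigma_{\dot\Lambda}(x',\xi')$ by applying $\dot\Lambda$ to $e^{i(\cdot)\cdot\xi'}$ and deforming the $y'$-contour as in (\ref{int.12}): the $y'$-integration fixes the outgoing frequency to $\xi'$, the tangential $x''$-integration is a standard symbol composition handled by analytic stationary phase (critical point $x''=x'$, $\zeta'=-\xi'$), and the normal integration $\int_0^\infty\!\cdots\,dx_n$ converges absolutely thanks to $\operatorname{Re}(\widetilde\varphi(x',-\xi')+\varphi(x',\xi'))\asymp|\xi'|$ and supplies the gain of one order in $\xi'$. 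Since $q$ is analytic up to $x_n=0$ near $x_0$, its Taylor data at the boundary (in $x_n$ and tangentially) obey analytic-symbol bounds that feed directly into (\ref{int.14}); reading off the expansion, $\sigma_{\dot\Lambda}(x',\xi')$ is a classical analytic symbol of order $-1$, with leading part a multiple of $q(x',0)/|\xi'|$, the remainders being exponentially small by the (\ref{int.15})-type estimates of this calculus. Finally, the contributions of $x$ away from $x_0$, and of the non-analytic parts of the parametrices, become exponentially small once the contour of (\ref{int.12}) is deformed, using part (i) and the exponential decay of the oscillatory factors.

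\emph{Main obstacle.} The whole of part (ii) is the hard part: keeping every step within the analytic symbol calculus — solving the eikonal and transport equations as formal classical analytic symbols with the factorial bounds (\ref{int.14}), resumming them with an exponentially small error, and, above all, controlling the half-line integration in $x_n$ together with the tangential composition so that the output genuinely satisfies (\ref{int.14}) and (\ref{int.15}) rather than merely their $C^\infty$ counterparts. A further point to verify is that the hypothesis that $V$ is only $C^\infty$ away from $x_0$ is harmless: the diagonal singularity of $\dot\Lambda$ near $x_0$, hence its symbol there, depends only on the data near $x_0$ — where $V$ and $\partial\Omega$ are analytic — while the contributions from positive distance are analytic by part (i) and hence exponentially negligible after the contour deformation defining $\sigma_{\dot\Lambda}$.
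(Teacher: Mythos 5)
Your proposal reaches the conclusion by a genuinely different route from the paper's Section~\ref{prp}, in both parts. For (i) you apply local analytic elliptic regularity (Morrey--Nirenberg) directly to $u(\cdot;y')=G\bigl(qK(\cdot,y')\bigr)$; this is a natural shortcut, but it presupposes \emph{joint} analyticity of $K(x,y')$ in $(x,y')$ on the off-diagonal set --- Lemma~\ref{int1} as stated only gives analyticity in $y'$, locally uniformly in $x$ --- as well as a version of boundary analytic regularity with holomorphic dependence on the external parameter $y'$; these can be supplied by complexifying $y'$ and using uniqueness of the holomorphic extension, but they are not quite automatic and should be addressed. The paper instead tests $\dot\Lambda$ against FBI packets $f_\alpha,f_\beta$ as in (\ref{ppr.1}) and proves the decay (\ref{ppr.2}), splitting into cases according to whether the frequencies $\alpha_{\xi'},\beta_{\xi'}$ stay bounded below (analytic WKB plus nonstationary phase, since $\alpha_{x'}\ne\beta_{x'}$) or one of them is small (a contour deformation in $z'$ exploiting the holomorphic extension of $K(f_\alpha)$, together with a Dirichlet a priori bound on the deformed domain). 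For (ii) you propose to insert analytic WKB parametrices for both $K$ and $K^{\mathrm t}$ into $K^{\mathrm t}qK$ and compose by analytic stationary phase, treating the normal $x_n$-integration as a Laplace transform --- the route sketched heuristically in Sections~\ref{la}--\ref{fop}. The paper's actual proof is slicker: using $\dot\Lambda=-\gamma\partial_{y_n}GqK$, it first computes $K\bigl(e_{t'}\,e^{i(\cdot)\cdot\eta'/h}\bigr)=Ch^{(1-n)/2}a\,e^{i\phi/h}$ by analytic WKB, then constructs a \emph{single} cl.a.s.\ $b$ solving the inhomogeneous transport problem $(h^2\Delta-h^2V)\bigl(h^{\mu}b\,e^{i\phi/h}\bigr)=Ch^{\mu-1}a\,e^{i\phi/h}q$ with $b\vert_{y_n=0}=0$, after which the symbol is read off from $-\partial_{y_n}b\vert_{y_n=0}$ and the Gaussian $t'$-average of Remark~\ref{int2.7}. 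Solving one chain of transport equations down the half-line replaces the double composition of oscillatory integrals your plan requires, and the factorial bounds (\ref{int.14})--(\ref{int.15}) come out automatically. Your route can in principle be carried through, but, as you yourself note, the analytic-symbol bookkeeping for the tangential composition together with the half-line Laplace integral is precisely the bulk of the work that the paper's formulation sidesteps.
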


We get the following immediate consequence
\begin{cor}\label{int5}
Under the conditions of the previous theorem the map
$$q \rightarrow  \dot{{\Lambda}}$$ is injective.
\end{cor}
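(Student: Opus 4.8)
The plan is to deduce the statement from Theorem \ref{int3} applied to the difference of two potentials, and then to close the argument by the classical completeness of products of solutions of the Schr\"odinger equation. So let $q_1,q_2\in L^\infty(\Omega)$ give rise to the same linearized DN operator near $x_0$ and put $q=q_1-q_2$. Since $q\mapsto\dot{{\Lambda}}=K^\mathrm{t}qK$ is linear in $q$ by (\ref{int.7}), the potential $q$ produces $\dot{{\Lambda}}=0$ near $x_0$. But the zero operator trivially satisfies the hypotheses of Theorem \ref{int3}: its distribution kernel vanishes off the diagonal, hence is analytic there, and its symbol is the zero symbol, which is a classical analytic symbol of every order. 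Applying the theorem, I obtain that $q$ is real-analytic up to the boundary in a neighborhood of $x_0$.

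It then remains to turn this regularity into $q\equiv0$. Writing the vanishing of $\dot{{\Lambda}}$ weakly by means of (\ref{int.6})--(\ref{int.7}), for all boundary data $f,g$ supported in a small neighborhood $\Gamma\subset\partial\Omega$ of $x_0$ one has
\[
0=\langle\dot{{\Lambda}}f,g\rangle=\int_\Omega q\,(Kf)(Kg)\,dx,
\]
so that $\int_\Omega q\,uv\,dx=0$ for all solutions $u,v$ of $(\Delta-V)w=0$ in $\Omega$ whose traces on $\partial\Omega$ are supported in $\Gamma$. Using that $q$ and $V$ are real-analytic up to the boundary near $x_0$, one natural route is to continue them holomorphically across $\partial\Omega$ there, to pass to a slightly larger domain, and to use a Runge-type approximation --- legitimate because $\Delta-V$ enjoys the weak unique continuation property --- in order to propagate the identity $\int_\Omega q\,uv\,dx=0$ to all solutions $u,v$ of $(\Delta-V)w=0$ on $\Omega$. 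Completeness of products of such solutions (Sylvester--Uhlmann in dimension $n\ge 3$, and the corresponding results for $n=2$) then gives $q\equiv0$, i.e. $q_1=q_2$. If, instead, one reads the corollary with data on all of $\partial\Omega$, the completeness argument can be invoked directly and Theorem \ref{int3} is not even needed.

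The hard part is this second step: passing from the purely local vanishing of $\dot{{\Lambda}}$ to the global vanishing of $q$ is precisely the obstruction that makes partial-data problems delicate, and it is exactly the real-analyticity supplied by Theorem \ref{int3} --- which allows $q$ and $V$ to be continued across the analytic part of the boundary --- that makes the Runge/unique-continuation propagation feasible. The first step, by contrast, is immediate once the theorem is in hand, which is presumably why the authors record the corollary as an ``immediate consequence''.
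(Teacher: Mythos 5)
Your first step is exactly what the paper does: reduce to $q$ with $\dot{\Lambda}=0$, observe that the zero operator trivially satisfies the hypotheses of Theorem~\ref{int3}, and conclude that $q$ is real-analytic up to the boundary near $x_0$. The problem is your second step, which heads off in a direction the paper never takes and which is not needed.

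The paper's own argument is completely local and requires no Runge-type approximation and no completeness of products of solutions. The missing ingredient you did not use is that, once $q$ is analytic up to the boundary, the full asymptotic expansion of the symbol $\sigma_{\dot{\Lambda}}(y',\eta')$ determines (and is determined by, in a triangular and invertible way) the Taylor coefficients $\partial_{y_n}^k q(y',0)$ of $q$ at the boundary. This is precisely the Borel--Laplace correspondence set up in Section~\ref{la} (see (\ref{la.3}) and Proposition~\ref{la1}) and realized rigorously in Section~\ref{ep} via (\ref{ep.22})--(\ref{ep.23}), where $q$ is identified near $x_0$ with the analytic function $\widetilde q$ whose coefficients $q_k$ are read off from the symbol. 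If $\dot{\Lambda}\equiv 0$ then all $q_k$ vanish, hence $\widetilde q\equiv 0$, hence $q\equiv 0$ near $x_0$, and since $q$ is analytic this propagates to the full set where $q$ is analytic. That is the paper's one-line proof: analyticity of $q$ from Theorem~\ref{int3} together with the vanishing of its boundary Taylor series forces $q=0$. Nothing about global solutions is involved.

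Your proposed substitute for this step is also problematic on its own terms. Solutions of $(\Delta-V)w=0$ in $\Omega$ with boundary traces supported in a small $\Gamma$ are \emph{not} dense among all solutions (the trace map is injective, so imposing a support condition on the trace carves out a proper closed subspace), and the weak unique continuation property does not provide the kind of Runge approximation you invoke in that direction. What is true --- and highly nontrivial --- is that for $V=0$ the set of \emph{products} $uv$ of such solutions is still rich enough, which is essentially the content of \cite{DKSU}; but that is an independent result, it is not known for general analytic $V$, and it would make the corollary anything but an ``immediate consequence.'' In short: you identified the right starting point, but you overlooked the key structural fact (symbol expansion $\leftrightarrow$ boundary Taylor series of $q$) that makes the corollary a two-line local deduction, and replaced it with a global scheme that both goes beyond what is claimed and has a genuine gap at the density step.
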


This follows from the previous result since $q$ must be analytic on
$W$ and the Taylor series of $q$ vanishes on $W$ then $q=0$ on the set
where $q$ is analytic.

{
\begin{remark}\label{int6}
As we have seen, we need the analyticity of $\dot{\Lambda }(x',y')$
away from the diagonal in order to define $\sigma _{\dot{\Lambda
  }}(y',\eta ')$ up to an exponentially small term. However, the proof
of the theorem only uses the estimates (\ref{int.14}) for the formal
asymptotic expansion of the symbol and the fact that an FBI-transform
of $\sigma _{\dot{\Lambda }}(y',\eta ')$ with respect to $y'$ is an
analytic symbol, and these objects can be defined without that
analyticity assumption. 
\end{remark}}

Most of the paper will be devoted to the proof of Theorem \ref{int3},
and in Section \ref{prp} we will prove Proposition \ref{int4}.

\section{Heuristics and some remarks about the Laplace transform}\label{la}
\setcounter{equation}{0}

Let us first explain heuristically why some kind of Laplace transform
will appear. Assume that $x_0\in \partial \Omega $ and that $V$ and
$\partial \Omega $ are analytic near that point. Choose local analytic
coordinates $$y=(y_1,...,y_{n-1},y_n)=(y',y_n)$$ centered at $x_0$ such
that the set
$\Omega $ coincides near $x_0$ (i.e. $y=0$) with the half-space ${\bf R}^n_+=\{y\in {\bf
  R}^n;\, y_n>0 \}$. Assume also (for this heuristic discussion) that
we know that $q(y)=q(y',y_n)$ is analytic in $y'$ and that the
original Laplace operator remains the standard Laplace operator also
in the $y$-coordinates. Then up to a smoothing operator, the Poisson operator
is of the form
$$Ku(y)=\frac{1}{(2\pi )^{n-1}}\int e^{i(y'-w')\cdot \eta '-y_n|\eta
  '|}a(y,\eta ')u(w')dw'd\eta ',$$
where the symbol $a$ is equal to 1 to leading order. We can view
$K$, $q$, $K^\mathrm{t}$ as pseudodifferential operators in $y'$ with
operator valued symbols. $K$ has the operator valued symbol
\begin{equation}\label{la.1}
K(y',\eta '):{\bf C}\ni z\mapsto ze^{-y_n|\eta '|}a(y,\eta ')\in
L^2([0,+\infty [_{y_n}).
\end{equation}
The symbol of multiplication with $q$ is independent of $\eta
'$ and equals multiplication with $q(y',\cdot )$. The symbol of
$K^\mathrm{t}$ is to leading order
\begin{equation}\label{la.2}
K^\mathrm{t}(y',\eta '):\ L^2([0,+\infty [_{y_n})\ni f(y_n)\mapsto
\int_0^\infty e^{-y_n|\eta '|}a(y,-\eta ')f(y_n)dy_n\in {\bf C}.
\end{equation}
For simplicity we set $a=1$ in the following discussion.
To leading order the symbol of $\dot{{\Lambda}}$ is 
\begin{equation}\label{la.3}
\sigma _{\dot{{\Lambda}}}(y',\eta ')=\int_0^\infty e^{-2y_n |\eta '|}
q(y',y_n)dy_n=({\cal L}q(y',\cdot ))(2|\eta '|),
\end{equation}
where
$${\cal L}f(\tau )=\int_0^\infty e^{-t\tau } f(t)dt$$
is the Laplace transform. 

\par Now we fix $\eta _0'\in \dot{{\bf R}}^{n-1}$ and assume that $\sigma _{\dot{{\Lambda}}}(y',\tau \eta _0')$ is a
cl.a.s. on $\mathrm{neigh\,}(0,{\bf R}^{n-1})\times {\bf R}_+$,
\begin{equation}\label{la.4}
\sigma _{\dot{{\Lambda}}}(y',\tau \eta_0 ')\sim \sum_1^\infty n_k(y',\tau ),
\end{equation}
where $n_k$ is analytic in $y'$ in a fixed complex neighborhood of
$0$, (positively) homogeneous of degree $-k$ in $\tau $ and satisfying
\begin{equation}\label{la.5}
|n_k(y',\tau )|\le C^{k+1}k^k|\tau |^{-k}.
\end{equation}
More precisely for $C>0$ large enough, there exists $\widetilde{C}>0$
such that
\begin{equation}\label{la.6}
  |\sigma _{\dot{{\Lambda}}}(y',\tau \eta_0 ')-\sum_1^{[|\eta
    '|/C]}n_k(y',\tau )|\le \widetilde{C} \exp (-\tau /\widetilde{C})
\end{equation}
on the real domain.

From (\ref{la.3}) we also have 
\begin{equation}\label{la.7}
  |({\cal L}q(y',\cdot ))(2|\eta _0'|\tau )-\sum_1^{[|\eta
    '|/C]}n_k(y',\tau)|\le \exp (-\tau /\widetilde{C}),
\end{equation} 
for $y'\in \mathrm{neigh\,}(0,{\bf R}^{n-1})$, $\tau \ge 1$. In this
heuristic discussion we assume that (\ref{la.7}) extends to $y'\in
\mathrm{neigh\,}(0,{\bf C}^{n-1})$.  It then follows that $q(y',y_n)$
is analytic for $y_n$ in a neighborhood of $0$, from the following
certainly classic result about Borel transforms.
\begin{prop}\label{la1}
Let $q\in L^\infty ([0,1])$ and assume that for some $C,\widetilde{C}>0$, 
\begin{equation}\label{la.8}
|{\cal L}q(\tau )-\sum_0^{[\tau /C]}q_k\tau ^{-(k+1)}|\le e^{-\tau
  /\widetilde{C}},\ \tau >0,
\end{equation}
\begin{equation}\label{la.9}
|q_k|\le \widetilde{C}^{k+1}k^k.
\end{equation}
Then $q$ is analytic in a neighborhood of $t=0$. The converse also
holds.
\end{prop}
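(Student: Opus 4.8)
The plan is to recognize (\ref{la.8})–(\ref{la.9}) as the statement that the formal Borel transform $B(t)=\sum_0^\infty q_k t^k/k!$ (here the $k^k$ growth in (\ref{la.9}) is compatible, via Stirling, with $B$ being holomorphic in a disc $|t|<1/(e\widetilde C)$) is summable in a sense dual to the Laplace transform, and to extract $q$ near $t=0$ by inverting the truncated Laplace asymptotics. Concretely, first I would fix the direction $\tau>0$ real, write $S_N(\tau)=\sum_{k=0}^{N}q_k\tau^{-(k+1)}$ with $N=[\tau/C]$, and note that each term $q_k\tau^{-(k+1)}$ is, up to the factor $k!$, the Laplace transform of $q_k t^k/k!$. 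Thus $S_N(\tau)=\mathcal L(B_N)(\tau)$ where $B_N(t)=\sum_{k=0}^N q_k t^k/k!$, modulo the tails of the Laplace integrals $\int_1^\infty e^{-t\tau}t^k/k!\,dt$, which for $N\le\tau/C$ with $C$ large are bounded by $e^{-\tau/\widetilde C'}$ uniformly. Hence (\ref{la.8}) rewrites as $|\mathcal L(q-B_N)(\tau)|\le e^{-\tau/\widetilde C''}$ for all large $\tau$, where one reads $q-B_N$ as a function on $[0,1]$ and $B_N$ is the $N$-th Taylor polynomial of the genuine holomorphic function $B$ on a fixed disc.

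Next I would let $N\to\infty$ along $N=[\tau/C]$ and control $B-B_N$ on $[0,1]$: by (\ref{la.9}) and Stirling, $|B(t)-B_N(t)|\le \sum_{k>N}\widetilde C^{k+1}k^k t^k/k!\le \sum_{k>N}(e\widetilde C t)^k\widetilde C$, which on the sub-disc $t\le 1$ is not small, so the argument must instead be run \emph{before} passing to the limit, comparing $q$ and $B$ through their Laplace transforms on the interval where the comparison is valid. The cleaner route: show that $\mathcal L(q)(\tau)$ and $\mathcal L(\mathbf 1_{[0,\rho]}B)(\tau)$ agree up to $O(e^{-\tau/\widetilde C})$ for a suitable small $\rho>0$ — the first because of (\ref{la.8}) with the tail estimate above (choosing $C$ large forces $N=[\tau/C]$ large enough that $B-B_N$ contributes only exponentially little to the Laplace transform on $[0,\rho]$ with $\rho<1/(e\widetilde C)$), and then invoke injectivity/uniqueness: if two bounded functions on $[0,\rho]$ have Laplace transforms differing by $O(e^{-\tau/\widetilde C})$ for all $\tau>0$, their difference must vanish on $[0,\rho-\epsilon]$ for every $\epsilon>0$ — this is a Paley–Wiener / Phragmén–Lindelöf type statement (the difference, analytically continued in $\tau$, is entire of exponential type with a half-plane decay, hence supported suitably), so $q=B$ a.e.\ on a neighborhood of $0$ and $q$ is therefore analytic there.

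The main obstacle is exactly this last uniqueness step: turning the \emph{exponentially small} (not zero) discrepancy $|\mathcal L(q-g)(\tau)|\le e^{-\tau/\widetilde C}$ into the conclusion that $q=g$ on $[0,\rho_0]$ for some $\rho_0>0$ depending on $\widetilde C$. I expect this to follow from a contour/Cauchy estimate: the Laplace transform of a bounded function supported in $[0,\rho]$ extends holomorphically to the whole plane with $|\mathcal L h(\zeta)|\lesssim e^{\rho(\mathrm{Re}\,\zeta)^-}$-type bounds in the left half-plane, and combining this two-sided control with the $e^{-\mathrm{Re}\,\zeta/\widetilde C}$ decay on the positive axis and a Phragmén–Lindelöf argument in sectors forces $h$ to be supported in $[1/\widetilde C,\rho]$; iterating (or rescaling) gives vanishing on $[0,\rho_0]$. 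For the converse direction (``The converse also holds''), one argues directly: if $q$ extends holomorphically to a disc of radius $R>0$, deform the Laplace contour $[0,\infty)$ to a path $[0,R/2]$ followed by a ray into the region of holomorphy, pick up the Taylor coefficients of $q$ at $0$ as the $q_k$, and bound the remainder by $e^{-\tau R/2}$, giving (\ref{la.8}); the Cauchy estimates on the disc of holomorphy give $|q_k|\le C_0 R^{-k}k!\le \widetilde C^{k+1}k^k$ by Stirling, yielding (\ref{la.9}). $\Box$
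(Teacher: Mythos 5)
Your plan follows essentially the same route as the paper: subtract from $q$ the (cut-off) Borel sum $1_{[0,a]}\sum q_k t^k/k!$, show the Laplace transform of the difference is $O(e^{-\tau/\widetilde C})$ on the positive axis (using a term-by-term tail estimate for $\int_a^\infty e^{-t\tau}t^k\,dt$ with $k\le\tau/C$), upgrade this to a right-half-plane bound by Phragm\'en--Lindel\"of in the two quarter-plane sectors, and conclude via Paley--Wiener that the difference is supported in $[1/\widetilde C,1]$, hence vanishes near $0$; the converse is likewise by Taylor expansion plus the same tail estimates. One caution: your interim claim that the difference ``must vanish on $[0,\rho-\epsilon]$ for every $\epsilon>0$'' overstates what an exponentially small (rather than zero) discrepancy can give --- the correct conclusion, which you then state, is that the support is pushed away from $0$ by a fixed distance $\sim 1/\widetilde C$, which suffices and is exactly what the paper obtains.
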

\begin{proof}
We shall first show the converse statement, namely that if $q$ is
analytic near $t=0$, then (\ref{la.8}), (\ref{la.9}) hold. We start by
computing the Laplace transform of powers of $t$.

\par For $ \tau >0$, $a>0$, $k\in {\bf N}$, we have
\begin{equation}\label{la.10}
\int_0^\infty e^{-t\tau }t^kdt=\frac{k!}{\tau ^{k+1}}.
\end{equation}
In fact, the integral to the left is equal to 
$$
(-\partial _\tau )^k\left(\int_0^\infty e^{-t\tau } dt \right)=
(-\partial _\tau )^k\left(\frac{1}{\tau } \right).
$$

\par Next, for $a>0$, we look at 
\begin{equation}\label{la.11}\begin{split}
&\frac{1}{k!}\int_0^a e^{-t\tau }t^k dt=\frac{1}{\tau
  ^{k+1}}\left(1-\frac{\tau ^{k+1}}{k!}\int_a^\infty  e^{-t\tau }t^k
  dt \right)\\
&=\frac{1}{\tau ^{k+1}}\left(1-\int_{a\tau }^\infty e^{-s}\frac{s^k}{k!}ds \right).
\end{split}
\end{equation}
Let first $\tau \in ]0,\infty [$ be large. 
For $0<\theta <1$, we write for $s\ge 0$,
$$
\frac{s^k}{k!}e^{-s}=\theta ^{-k}\underbrace{\frac{(\theta s)^k}{k!}e^{-\theta
  s}}_{\le 1}e^{-(1-\theta )s}\le \theta ^{-k}e^{-(1-\theta ) s}.
$$
Thus,
\begin{equation}\label{la.12}
\int_{a\tau }^\infty e^{-s}\frac{s^k}{k!}ds\le\theta ^{-k}\int_{a\tau
}^\infty e^{-(1-\theta )s}ds = \frac{\theta ^{-k}e^{-(1-\theta )a\tau
  }}{1-\theta }.
\end{equation}
We will estimate this for $k\le a\tau /{\cal O}(1)$. Under the
apriori assumption that $\theta \le 1-\frac{1}{{\cal O}(1)}$, we look
for $\theta $ that minimizes the enumerator 
$$\theta ^{-k}e^{-(1-\theta
  )a\tau }=e^{-[(1-\theta )a\tau +k\ln \theta ]}.$$ 
Setting the
derivative of the exponent equal to zero, we are led to the choice
$\theta =\frac{k}{a\tau }$. Assume that
\begin{equation}\label{la.13}
\frac{k}{a\tau }\le \theta _0<1.
\end{equation}
Then,
\[
(1-\theta )a\tau +k\ln \theta =a\tau \left( 1-\frac{k}{a\tau
}+\frac{k}{a\tau }\ln \frac{k}{a\tau }\right)=a\tau
(1-f(\frac{k}{a\tau })),
\]
where 
$$
f(x)=x+x\ln\frac{1}{x},\ 0\le x\le 1.
$$
Clearly $f(0)=0$, $f(1)=1$ and for $0<x<1$ we have $f'(x)=\ln
\frac{1}{x}>0$, so $f$ is strictly increasing on $[0,1]$. In view of
(\ref{la.13}) we have
$$
(1-\theta )a\tau +k\ln \theta \ge a\tau (1-f(\theta _0)),
$$
and (\ref{la.12}) gives
\begin{equation}\label{la.14}
\int_{a\tau }^\infty e^{-s}\frac{s^k}{k!}ds\le \frac{e^{-a\tau
    (1-f(\theta _0))}}{1-\theta _0}.
\end{equation}
Using this in
(\ref{la.11}), we get
\begin{equation}\label{la.15}\begin{split}
&\frac{1}{k!}\int_0^a e^{-t\tau }t^k dt=\frac{1}{\tau ^{k+1}}(1+{\cal
  O}(1)e^{- a\tau /C(\theta _0)}),\\
&\hbox{for }\frac{k}{a\tau }\le \theta _0<1,\hbox{ where }C(\theta _0)>0.
\end{split}
\end{equation}

\par Now, assume that $q\in C([0,1])$ is analytic near $t=0$. Then for
$t\in [0,2a]$, $0<a\ll 1$, we have
$$
q(t)=\sum_0^\infty \frac{q^{(k)}(0)}{k!}t^k,
$$
where
\begin{equation}\label{la.16}
\frac{|q^{(k)}(0)|}{k!}\le \widetilde{C}\frac{1}{(2a)^k},
\end{equation}
so
$$
|q(t)-\sum_0^{[\tau /C]}\frac{q^{(k)}(0)}{k!}t^k|\le \widetilde{C} e^{-\tau
  /\widetilde{C}},\ 0\le t\le a.
$$
Hence,
$$
{\cal L}q=\sum_0^{[\tau /C]}\frac{q^{(k)}(0)}{\tau ^{k+1}}+{\cal
  O}(e^{-\tau /\widetilde{C}})+\underbrace{{\cal L}(1_{[a,1]}q)(\tau
  )}_{={\cal O}(e^{-\tau /\widetilde{C}})}
$$
and we obtain (\ref{la.8}) with $q_k=q^{(k)}(0)$, while (\ref{la.9})
follows from (\ref{la.16}).

\par We now prove the direct statement in the proposition, so we take
$q\in L^\infty ([0,1])$ satisfying (\ref{la.8}), (\ref{la.9}). Put for
$a>0$ small,
$$
\widetilde{q}(t)=q(t)-1_{[0,a]}(t)\sum_0^\infty \frac{q_k}{k!}t^k.
$$
The proof of the converse part shows that
\begin{equation}\label{la.17}
|{\cal L}\widetilde{q}(\tau )|\le e^{- \tau /\widetilde{C}},
\end{equation}
where $\widetilde{C}$ is a new positive constant, and it suffices to
show that
\begin{equation}\label{la.18}
\widetilde{q}\hbox{ vanishes in a neighborhood of }0.
\end{equation}

\par We notice that ${\cal L}\widetilde{q}$ is a bounded holomorphic
function in the right half-plane. We can therefore apply the
Phragm\'en-Lindel\"of theorem in each sector $\mathrm{arg\,}\tau \in
[0 ,\frac{\pi }{2}]$ and $\mathrm{arg\,}\tau \in
[-\frac{\pi }{2},0 ]$ to the holomorphic function
$$
e^{\tau /\widetilde{C}}{\cal L}\widetilde{q}(\tau )
$$
and conclude that this function is bounded in the right half-plane:
\begin{equation}\label{la.19}
|{\cal L}\widetilde{q}(\tau )|\le {\cal O}(1)e^{-\Re \tau
  /\widetilde{C}},\ \Re \tau \ge 0.
\end{equation}
Now, ${\cal L}\widetilde{q}(i\sigma )={\cal F}\widetilde{q}(\sigma )$,
where ${\cal F}$ denotes the Fourier transform, and Paley-Wiener's
theorem allows us to conclude that
$\mathrm{supp\,}\widetilde{q}\subset [\frac{1}{\widetilde{C}},1]$.
\end{proof}
\section{The Fourier integral operator $q\mapsto \sigma _{\dot{{\Lambda}}}$}\label{fop}
\setcounter{equation}{0} Assume that $\partial \Omega $ and $V$ are
analytic near the boundary point $x_0$. Let $y'=(y_1,...,y_{n-1})$ be
local analytic coordinates on $\partial \Omega $, centered at
$x_0$. Then we can extend $y'$ to analytic coordinates
$y=(y_1,...,y_{n-1},y_n)=(y',y_n)$ in a full neighborhood of $x_0$,
where $y'$ are extensions of the given coordinates on the boundary and
such that $\Omega $ is given (near $x_0$) by $y_n>0$ and
\begin{equation}\label{fop.1}
-P=D_{y_n}^2+R(y,D_{y'}),
\end{equation}
where $R$ is a second order elliptic differential operator in $y'$
with positive principal symbol $r(y,\eta ')$. (Here we neglect a
contribution $f(y)\partial _{y_n}$ which can be eliminated by
conjugation.) Then there is a neighborhood $W\subset {\bf R}^n$ of
$y=0$ and a cl.a.s. $a(y,\xi ')$ on $W\times {\bf R}^{n-1}$ of order
$0$ such that
\begin{equation}\label{fop.2}
Ku(y)=\frac{1}{(2\pi )^{n-1}}\iint e^{i(\phi (y,\xi ')-\widetilde{y}'\cdot \xi
  ')}a(y,\xi ')u(\widetilde{y}')d\widetilde{y}'d\xi '+K_au(y),
\end{equation}
for $y\in W$, $u\in C_0^\infty (W\cap \partial \Omega )$. The
distribution kernel of $K_a$ is analytic on $W\times (W\cap \partial
\Omega )$ and we choose a realization of $a$ which is analytic in
$y$. 
$\phi $ is the solution of the Hamilton-Jacobi problem
\begin{equation}\label{fop.3}\begin{split}
    &(\partial _{y_n}\phi )^2+r(y,\phi '_{y'})=0,\ \Im \partial
    _{y_n}\phi >0,\\ &\phi (y',0,\xi ')=y'\cdot \xi '.
\end{split}
\end{equation}
This means that we choose $\phi $ to be the solution of 
\begin{equation}\label{fop.4}
\partial _{y_n}\phi -ir(y,\phi '_{y'})^{1/2}=0,
\end{equation}
with the natural branch of $r^{1/2}$ with a cut along the real
negative axis. 

\par To see this, recall (by the analytic WKB-method, cf. \cite{Sj82},
Ch.~9) that we can construct the first term
$K_\mathrm{fop}u$ in the right hand side of (\ref{fop.2}) such that
$PK_\mathrm{fop}$ has analytic distribution kernel and $\gamma
K_\mathrm{fop}=1$. It then follows from local analytic regularity in
elliptic boundary value problems, that the remainder operator $K_a$
has analytic distribution kernel.

\par We notice that 
\begin{equation}\label{fop.4.5}
  K(e^{ix'\cdot \xi '})=e^{i\phi (y,\xi ')}a(y,\xi ')+{\cal O}(e^{-|\xi '|/C}), \end{equation}
since the first term to the right solves the problem 
$$
Pu=0,\quad {{u}_\vert}_{y_n=0}=e^{iy'\cdot \xi '}
$$
with an exponentially small error in the first equation. When $V$ is real, then $K$ is real, so $K(e^{ix'\cdot (-\xi
  ')})=\overline{K(e^{ix'\cdot \xi '})}$. It follows that
\begin{equation}\label{fop.5}
\phi (y,-\xi ')=-\overline{\phi (y,\xi ')},\ a(y,-\xi
')=\overline{a(y,\xi ')}
\end{equation}
without any error in the last equation when viewing $a$ as a formal
cl.a.s. Now $\phi $ and the leading homogeneous term $a_0$ in $a$ are
independent of $V$, so if we drop the reality assumption on $V$, the
first part of (\ref{fop.5}) remains valid and the second part is valid
to leading order. 

\par We shall now view $\dot{{\Lambda}}=K^\mathrm{t}qK=K^*qK$ as a
pseudodifferential operator in the classical quantization. In this
section we proceed formally in order to study the associated geometry. A
more efficient analytic description will be given later for the left
composition with an FBI-transform in $x'$. The symbol
becomes 
\[
\begin{split}
&\sigma _{\dot{{\Lambda}}}(x',\xi ')=e^{-ix'\cdot \xi '}{\dot{\Lambda }}(e^{i(\cdot )\cdot \xi '})\\
&=(2\pi )^{1-n}\iint e^{i(x'\cdot (\eta '-\xi ')-\phi^*(y,\eta ')+\phi
  (y,\xi '))}a(y,-\eta ')a(y,\xi')q(y)dyd\eta ',
\end{split}
\]
where in general we write $f^*(z)=\overline{f(\overline{z})}$ for the
holomorphic extension of the complex conjugate of a function $f$. Here
we use (cf.~(\ref{fop.5})) that
$$
K^\mathrm{t}v(x')=(2\pi )^{1-n}\iint e^{i(x'\cdot \eta '-\phi
  ^*(y,\eta '))}a(y,-\eta ')v(y)dyd\eta '.
$$
\par Actually, rather than letting $\xi '$ tend to $\infty $ we
replace $\xi '$ with $\xi '/h$ where the new $\xi '$ is of length
$\asymp 1$ and $h\to 0$. This amounts to viewing $\dot{\Lambda }$ as a
semi-classical pseudodifferential operator with semi-classical symbol
$\sigma _{\dot{{\Lambda}}}(x',\xi ';h)=\sigma
_{\dot{{\Lambda}}}(x',\xi '/h)$. Thus,
\[
\begin{split}
&\sigma _{\dot{{\Lambda}}}(x',\xi ';h)=e^{-ix'\cdot \xi
  '/h}\dot{\Lambda }(e^{i(\cdot )\cdot \xi '/h})\\
&=(2\pi h)^{1-n}\iint e^{\frac{i}{h}(x'\cdot (\eta '-\xi ')-\phi^*(y,\eta ')+\phi
  (y,\xi '))}a(y,-\eta ';h)a(y,\xi';h)q(y)dyd\eta ',
\end{split}
\]
where $a(y,\xi ';h)=a(y,\xi '/h)$.

We have 
\begin{equation}\label{fop.6}
\phi (y,\xi ')=y'\cdot \xi '+\psi (y,\xi '),\ \phi^* (y,\eta ')=y'\cdot \eta '+\psi^* (y,\eta '), 
\end{equation}
where
\begin{equation}\label{fop.7}
\Im \psi \asymp y_n,\ \Re \psi ={\cal O}(y_n^2), 
\end{equation}
uniformly on every compact set which does not intersect the zero
section. (\ref{fop.5}) tells us that $\Re \psi $ is odd and $\Im \psi $ is
even with respect to the fiber variables $\xi '$ (and also positively
homogeneous of degree 1 of course). Using (\ref{fop.6}) in the formula for the
symbol of $\dot{{\Lambda}}$, we get
\begin{equation}\label{fop.8}
\begin{split}
&\sigma _{\dot{{\Lambda}}}(x',\xi ';h)\\
&=(2\pi h)^{1-n}\iint e^{\frac{i}{h}\Phi_M (x',\xi ',y,\eta ')}a(y,-\eta ';h)a(y,\xi';h)q(y)dyd\eta '\\
&=:Mq(x',\xi ';h),
\end{split}
\end{equation}
where
\begin{equation}\label{fop.9}
\Phi_M (x',\xi ',y,\eta ')=(x'-y')\cdot (\eta '-\xi ')+\psi (y,\xi
')-\psi ^*(y,\eta '),
\end{equation}
and $\eta '$ are the fiber variables. We shall see that this is a
nondegenerate phase function in the sense of L.~H\"ormander
\cite{Ho71} except for the fact that $\Phi_M $ is not homogeneous in
$\eta '$ alone, so $q\mapsto M_hq(x',\xi '):=Mq(x',\xi ';h)$ is a
semi-classical Fourier integral operator, at least formally. 

\par We fix a vector $\xi _0'\in \dot{{\bf R}}^{n-1}$ and consider
$\Phi_M $ in a neighborhood of $(x',y,\xi ',\eta ' )=(0,0,\xi
_0',\xi _0')\in {\bf
  C}^{4(n-1)+1}={\bf C}^{4n-3}$.
The critical set $C_{\Phi_M} $ of the phase ${\Phi_M} $ is given by $\partial _{\eta
  '}{\Phi_M} =0$, that is $x'-y'-\partial _{\eta '}\psi ^*(y,\eta ')=0$
or equivalently,
\begin{equation}\label{fop.11}
x'=y'+\partial _{\eta '}\psi ^*(y,\eta ').
\end{equation}
This is a smooth submanifold of codimension $n-1$ in ${\bf C}^{4n-3}$,
parametrized by $(y,\eta ',\xi ')\in \mathrm{neigh\,}((0,\xi _0',\xi
_0'),{\bf C}^{3n-2})$. We also see that ${\Phi_M} $ is a nondegenerate
phase function in the sense that $d\partial _{\eta '_1}{\Phi_M}
,...,d\partial _{\eta '_{n-1}}{\Phi_M}$ are linearly independent on
$C_{\Phi_M} $. Using the above parametrization, we express the graph of
the corresponding canonical relation $\kappa :{\bf C}^{2n}_{y,y^*}\to
{\bf C}^{4(n-1)}_{x',\xi ',{x'}^*,{\xi '}^*}$ (where we notice that
$4(n-1)\ge 2n$ with equality for $n=2$ and strict inequality for $n\ge
3$):
\begin{equation}\label{fop.12}
\begin{split}
&\mathrm{graph\,}(\kappa )=\{(x',\xi ',\partial _{x'}{\Phi_M} ,\partial
_{\xi '}{\Phi_M}; y,-\partial _y{\Phi_M} );\ (x',\xi',y,\eta ')\in C_{{\Phi_M} }
\} \\
&=\{ (y'+\partial _{\eta '}\psi ^*(y,\eta '),\xi ',\eta '-\xi
',\partial _{\xi '}\psi (y,\xi ')-\partial _{\eta '}\psi ^*(y,\eta
');\\ & y,-\partial _{y'}\psi (y,\xi ')+\partial _{y'}\psi ^*(y,\eta
')+\eta '-\xi ',-\partial _{y_n}\psi (y,\xi ')+\partial _{y_n}\psi
^*(y,\eta ')) \}
\end{split}
\end{equation}

The restriction to $y_n=0$ of this graph is the set of
points
\begin{equation}\label{fop.13}
(y',\xi ',\eta '-\xi ',0;y',0,\eta '-\xi ',-\partial _{y_n}\psi
(y',0,\xi')+\partial _{y_n}\psi ^* (y',0,\eta ')).
\end{equation}
It contains the point
\begin{equation}\label{fop.13.5}
(0,\xi _0',0,0;0,0,-2 \partial _{y_n}\psi (0,\xi _0'))=
(0,\xi _0',0,0;0,0,-2 ir(0,\xi _0')^{\frac{1}{2}})
\end{equation}

The tangent space at a point where $y_n=0$ is given by
\begin{equation}\label{fop.14}
\begin{split}
\{
(&\delta _{y'}+{\psi ^*}''_{\eta ',y_n}\delta _{y_n},\delta _{\xi
  '},\delta _{\eta '}-\delta _{\xi '},(\psi ''_{\xi ',y_n}(y,\xi ')-
{\psi ^*}''_{\eta ',y_n}(y,\eta '))\delta _{y_n};\\
&\delta _y,(-\psi ''_{y',y_n}(y,\xi ')+{\psi ^*}''_{y',y_n}(y,\eta
'))\delta _{y_n}+\delta _{\eta '}-\delta _{\xi '},\\ &(-\psi
''_{y_n,y}(y,\xi ')+{\psi ^*}''_{y_n,y}(y,\eta '))\delta _y+(-\psi
''_{y_n,\xi '}\delta _{\xi '}+{\psi ^*}''_{y_n,\eta '}\delta _{\eta '}))
  \}
\end{split}
\end{equation}
From (\ref{fop.14}) we see that at every point of
$\mathrm{graph\,}\kappa $ with $y_n=0$ and with $\eta '\approx \xi '$,
\begin{itemize}
\item[1)] The projection $\mathrm{graph\,}(\kappa )\to {\bf
    C}^{2n}_{y,y^*}$ has surjective differential,
\item[2)] The projection $\mathrm{graph\,}(\kappa )\to {\bf
    C}^{4(n-1)}_{x',\xi ',{x'}^*,{\xi '}^*}$ has injective differential.
\end{itemize}
In fact, since $\kappa $ is a canonical relation, 1) and 2) are
pointwise equivalent, so it suffices to verify 2). In other words, we
have to show that if
\begin{equation}\label{fop.15}
\begin{split}
0&=\delta _{y'}+{\psi ^*}''_{\eta ',y_n}\delta _{y_n},
\\  0&=\delta _{\xi
  '},\\
0&=\delta _{\eta '}-\delta _{\xi '},\\
0&=(\psi ''_{\xi ',y_n}(y,\xi ')-
{\psi ^*}''_{\eta ',y_n}(y,\eta '))\delta _{y_n},
\end{split}
\end{equation}
then $\delta _{y'}=0$, $\delta _{y_n}=0$, $\delta _{\xi '}=0$, $\delta
_{\eta '}=0$.

\par
When $y_n=0$ we have $\partial _{y_n}\psi ^*=-\partial _{y_n}\psi $ and when in addition $\eta
'\approx \xi '$ we see that the $(n-1)\times 1$ matrix in the 4:th
equation is non-vanishing, so this equation implies that $\delta
_{y_n}=0$. Then the first equation gives $\delta _{y'}=0$ and from the
2:nd and the 3:d equations we get $\delta _{\xi '}=0$ and $\delta
_{\eta '}=0$ and we have verified 2).

\par As an exercise, let us determine the image under $\kappa $ of the
complexified conormal bundle of the boundary, given by $y_n=0$,
${y^*}'=0$. From (\ref{fop.13}) we see that it is equal to the set of
all points
\begin{equation}\label{fop.16}
(x',\xi ',0,0) .
\end{equation} 
The subset of real points in (\ref{fop.16}) is the image
of the set of points $(y',0,0,y_n^*)$ such that $y'$ is real and
$y_n^*\in -i{\bf R}_+$.

\par Now restrict $(x',\xi ')$ to the set of $(x',t\eta _0')$, $x'\in
{\bf C}^{n-1}$, $t\in {\bf C}$, where $0\ne \eta _0'\in {\bf
  R}^{n-1}$. This means that we restrict the symbol of $\dot{\Lambda
}$ to the radial direction $\xi '\in {\bf C}\eta _0'$ and consider
\begin{equation}\label{fop.17}\begin{split}
&\sigma _{\dot{{\Lambda}}}(x',t\eta _0';h)=Mq(x',t\eta
_0';h)=:M_\mathrm{new}q(x',t;h)\\
&=(2\pi h)^{1-n}\iint e^{i\Phi _{M_\mathrm{new}}(x',t,y,\eta ')/h}a(y,-\eta ';h)a(y,\xi';h)q(y)dyd\eta ',
\end{split}
\end{equation}
where 
\begin{equation}\label{fop.18}\begin{split}
   & \Phi _{M_\mathrm{new}}(x',t,y;\eta ')=\Phi_M (x',t\eta _0',y;\eta ')\\
   &= \psi (y,t\eta _0')-\psi ^* (y,\eta ')+(x'-y')\cdot (\eta
   '-t\eta _0')
\end{split}
\end{equation}
We will soon drop the subscripts ``new'' when no confusion is
possible. This is again a nondegenerate phase function. The new
canonical relation $\kappa _\mathrm{new}:\, {\bf C}^{2n}_{y,y^*}\to
{\bf C}^{2n}_{x',t,{x'}^*,t^*}$ has the graph
\begin{equation}\label{fop.19}
\begin{split}
&\mathrm{graph\,}(\kappa _\mathrm{new})\\ &=
\{(y'+\partial _{\eta '}\psi ^*(y,\eta '),t,\eta '-t\eta _0', \eta
_0'\cdot \partial _{\xi '}\psi (y,t\eta _0')-
\eta _0'\cdot \partial _{\xi '}\psi^* (y,\eta ');\\
& y,-\partial _{y'}\psi (y,t\eta _0')+\partial _{y'}\psi ^*(y,\eta
')+\eta '-t\eta _0',-\partial _{y_n}\psi (y,t\eta _0')+\partial
_{y_n}\psi ^*(y,\eta '))
 \} .
\end{split}
\end{equation}
This graph is conic with respect to the dilations 
$$
{\bf R}_+\ni \lambda \mapsto (x',\lambda t,\lambda
{x'}^*,t^*;y,\lambda y^*)
$$
The restriction of the graph to $y_n=0$ is 
$$
\{ (y',t,\eta '-t\eta _0',0;y',0,\eta '-t\eta _0',-\partial _{y_n}\psi
(y',0,t\eta _0')+\partial _{y_n}\psi ^*(y',0,\eta ') )\} ,
$$
where
$$
\partial _{y_n}\psi (y',0,\xi ')=ir(y',0,\xi ')^{1/2},\ \partial _{y_n}\psi^* (y',0,\xi ')=-ir(y',0,\xi ')^{1/2}
$$
so the restriction is
\begin{equation}\label{fop.19.2}
\{(y',t,\eta '-t\eta _0',0;y',0,\eta '-t\eta
_0',-i(r^{\frac{1}{2}}(y',0,t\eta _0')+r^{\frac{1}{2}}(y',0,\eta '))) \}.
\end{equation}
If we take $\eta =t\eta _0'$ and use that $r^{\frac{1}{2}}$ is
homogeneous of degree 1 in the fiber variables, we get
\begin{equation}\label{fop.19.4}
\{ (y',t,0,0;y',0,0,-2itr^{\frac{1}{2}}(y',0,\eta _0')) \}.
\end{equation}
We assume, to fix the ideas, that $r(0,\eta _0')=1/4$.
This is the graph of a diffeomorphism
$$
\mathrm{neigh\,}(0,\partial \Omega )\times (-i{\bf R}^+_{y_n^*})\to
\mathrm{neigh\,}(0;\partial \Omega )\times {\bf R}^+_t.
$$

The tangent space of (\ref{fop.19}) at a point where $y_n=0$ is given by
\begin{equation}\label{fop.20}
\begin{split}
\{ (\delta _{y'}&+(\psi ^*)''_{\eta ',y_n}\delta _{y_n},\delta
_{t},\delta _{\eta '}-\delta _t\eta _0',\eta _0'\cdot (\psi ''_{\xi
  ',y_n}-(\psi ^*)''_{\eta ',y_n})\delta _{y_n}; 
\\ &
\delta _y,(-\psi ''_{y',y_n}+(\psi ^*)''_{y',y_n})\delta _{y_n}+\delta
_{\eta '}-\delta _t\eta _0',\\ &\hskip 1cm (-\psi ''_{y_n,y}+(\psi
^*)''_{y_n,y})\delta _y-\psi ''_{y_n,\xi '}\delta _t\eta _0'+(\psi
^*)''_{y_n,\eta '}\delta _{\eta '})
 \}.
\end{split}
\end{equation}
The projection onto the first component is injective as can be seen
exactly as in the proof of the property 2) stated after
(\ref{fop.14}). Now $\kappa _\mathrm{new}$ is a canonical relation
between spaces of the same dimension so we conclude that $\kappa
_\mathrm{new}$ is a canonical transformation locally near
each point of its graph. Combining this with the observation right
after (\ref{fop.19.4}), we get
\begin{prop}\label{fop1}
(\ref{fop.19}) is the graph of a bijective canonical transformation 
$$
\kappa _\mathrm{new}:\, \mathrm{neigh\,}((0;0,-i),{\bf C}^n_y\times
{\bf C}_{y^*}^n)\to \mathrm{neigh\,}((0,1;0),{\bf C}^n_{x',t}\times
{\bf C}^n_{{x'}^*,t^*}).
$$
The neighborhoods can be taken conic with respect to the actions ${\bf
  R}_+\ni \lambda \mapsto (y,\lambda y^*)$ and ${\bf
  R}_+\ni \lambda \mapsto  (x,\lambda t,\lambda {x'}^*,t^*)$ and $\kappa
_{\mathrm{new}}$ intertwines the two actions (so $\kappa
_\mathrm{new}$ is positively homogeneous of degree 1, with $y^*$ as
the fiber variables on the departure side and with $t,{x'}^*$ as the
fiber variables on the arrival side).
\end{prop}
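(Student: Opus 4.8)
The plan is to recognise (\ref{fop.19}) as the conic canonical relation attached to the nondegenerate phase $\Phi _{M_\mathrm{new}}$, to check at one distinguished point that its projection to the arrival side is a local diffeomorphism, and then to use the explicit formula (\ref{fop.19.4}) on the slice $y_n=0$ to upgrade ``local canonical transformation'' into ``bijection between the stated conic neighbourhoods''. For the first point: since $\Phi _{M_\mathrm{new}}$ is a nondegenerate phase function, its critical manifold $C_{\Phi _{M_\mathrm{new}}}=\{x'=y'+\partial _{\eta '}\psi ^*(y,\eta ')\}$ is a $2n$-dimensional submanifold parametrised by $(y,\eta ',t)$, and the map whose image is (\ref{fop.19}) realises it as a conic Lagrangian in ${\bf C}^{2n}_{x',t,{x'}^*,t^*}\times {\bf C}^{2n}_{y,y^*}$, that is, as the graph of a conic canonical relation $\kappa _\mathrm{new}$; the relevant conicity is that $\Phi _{M_\mathrm{new}}$ is positively homogeneous of degree $1$ under $(y,\eta ',t)\mapsto (y,\lambda \eta ',\lambda t)$ (because $\psi ,\psi ^*$ are homogeneous of degree $1$ in their fibre arguments and the term $(x'-y')\cdot (\eta '-t\eta _0')$ is bilinear), which, as already recorded after (\ref{fop.19}), makes the graph invariant under $\lambda \mapsto (x',\lambda t,\lambda {x'}^*,t^*;y,\lambda y^*)$.

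Next I would work at the base point: by (\ref{fop.19.4}) with $r(0,\eta _0')=1/4$, the point of the graph with $y=0$, $\eta '=\eta _0'$, $t=1$ is $((0,1;0,0);(0;0,-i))$. Using the tangent-space formula (\ref{fop.20}), I would show — by the same linear algebra as in the argument following (\ref{fop.15}) — that the projection onto ${\bf C}^{2n}_{x',t,{x'}^*,t^*}$ has injective differential there: setting the first four entries of a tangent vector in (\ref{fop.20}) to zero, the identity $\partial _{y_n}\psi ^*=-\partial _{y_n}\psi =-ir(y',0,\cdot )^{1/2}$ on $y_n=0$ makes the coefficient of $\delta _{y_n}$ in the fourth entry equal to $2ir(y',0,\eta _0')^{1/2}\ne 0$ (by Euler's relation for the homogeneous function $r^{1/2}$), which forces $\delta _{y_n}=0$; the first entry then gives $\delta _{y'}=0$, and the second and third give $\delta _t=0$ and $\delta _{\eta '}=0$. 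Since $\kappa _\mathrm{new}$ is a canonical relation between symplectic spaces of equal dimension $2n$, injectivity of this projection makes it (and hence also the other projection) a local diffeomorphism, so $\kappa _\mathrm{new}$ is a genuine canonical transformation near $((0,1;0,0);(0;0,-i))$, and by continuity of the rank near all nearby points of the graph.

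Finally I would pass from ``local'' to a bijection of explicit conic neighbourhoods. By (\ref{fop.19.4}) and the observation following it, the restriction of the graph to $\{y_n=0\}\cap \{\eta '=t\eta _0'\}$ is \emph{exactly} the graph of the diffeomorphism $\mathrm{neigh\,}(0,\partial \Omega )\times (-i{\bf R}_+)\ni (y',y_n^*)\mapsto (y',t)\in \mathrm{neigh\,}(0,\partial \Omega )\times {\bf R}_+$ given by $x'=y'$, ${x'}^*=0$, $t^*=0$, $t=iy_n^*/(2r(y',0,\eta _0')^{1/2})$, which sends $(0;0,-i)$ to $(0,1;0,0)$. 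Hence, after shrinking the source neighbourhood of $(0;0,-i)$ in ${\bf C}^n_y\times {\bf C}^n_{y^*}$, the canonical transformation $\kappa _\mathrm{new}$ is a bijection onto an open neighbourhood of $(0,1;0,0)$ in ${\bf C}^n_{x',t}\times {\bf C}^n_{{x'}^*,t^*}$; one then replaces both neighbourhoods by their ${\bf R}_+$-saturations for the two actions above (still small conic neighbourhoods of the rays through the base points), and the degree-$1$ homogeneity of $\Phi _{M_\mathrm{new}}$ — through which $\partial _{\eta '}\psi ^*$, $\partial _{\xi '}\psi $ are homogeneous of degree $0$ and $\partial _y\psi $, $\partial _y\psi ^*$ of degree $1$ in the fibre variables — shows that $\kappa _\mathrm{new}$ intertwines the two actions, as claimed.

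The step I expect to be the main obstacle is the injectivity verification at the base point; it is also the only place where the ellipticity of $-P$ really enters, through $r(y',0,\eta _0')^{1/2}\ne 0$ (equivalently $\partial _{y_n}\psi \ne 0$ on the boundary). Everything else — that (\ref{fop.19}) is the conic Lagrangian attached to a nondegenerate phase, that a canonical relation between equidimensional symplectic spaces one of whose projections is a local diffeomorphism is automatically a local canonical transformation, and the passage from a local diffeomorphism to a bijection between explicitly described conic neighbourhoods — is standard symplectic/FIO bookkeeping once (\ref{fop.19.4}) pins down the base-point correspondence.
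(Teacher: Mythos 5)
Your proposal is correct and follows essentially the same route as the paper: nondegeneracy of the phase $\Phi_{M_\mathrm{new}}$ and its degree-one homogeneity identify (\ref{fop.19}) as a conic Lagrangian; the tangent-space computation from (\ref{fop.20}) shows the projection to the arrival side has injective differential at a boundary point (the paper simply refers back to the verification of property 2) after (\ref{fop.14}), whereas you usefully make the nonvanishing of the $\delta_{y_n}$-coefficient explicit via Euler's relation for $r^{1/2}$); equality of dimensions then upgrades this to a local canonical transformation, and the explicit real diffeomorphism read off from (\ref{fop.19.4}) pins down the base-point correspondence $(0;0,-i)\mapsto(0,1;0,0)$ and allows one to take the stated conic neighbourhoods. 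The only cosmetic difference is that the paper verifies injectivity of the projection at every point of the graph with $y_n=0$, while you do it at the base point and invoke continuity of rank to cover nearby points; both suffice for the local statement being proved.
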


\par Basically, the same exercise as the one leading to (\ref{fop.16})
shows that the image under $\kappa _\mathrm{new}$ of the complexified
conormal bundle, given by $y_n=0$, $(y^*)'=0$, is the zero section 
\begin{equation}\label{fop.20.5}
\{(x',t;({x'}^*,\, t^*)=0) \}.
\end{equation}

\par Consider the image of $T^*\partial \Omega \times i{\bf
  R}^-_{y^*_n}=\{(y,y^*);\,y',(y^*)'\in {\bf R}^{n-1},\, y_n= 0,\,
y_n^*\in i{\bf R}^- \}$ under $\kappa _\mathrm{new}$ and recall
(\ref{fop.19.2}).  If we restrict the attention to $t\in {\bf R}_+$,
so that $\eta '=(y^*)'+t\eta _0'\in {\bf R}^{n-1}$, we see
that $$y_n^*=-i(r^{\frac{1}{2}}(y',0,t\eta
_0')+r^{\frac{1}{2}}(y',0,\eta '))\in i{\bf R}^-.$$ Thus the image contains locally
$$\{(x',t,(x^*)',0);\, x',(x^*)'\in {\bf R}^{n-1},\, t\in {\bf R}^+
\},$$
which has the right dimension $2(n-1)+1$

\par Similarly, the image of $T^*\partial \Omega \times
\mathrm{neigh\,}(i{\bf R}^-_{y_n^*},{\bf C}_{y_n^*})$ is obtained by
dropping the reality condition on $t$ but keeping that on $\eta
'-t\eta _0'$, and we get 
\begin{equation}\label{fop.21}
\begin{split}
&\kappa _\mathrm{new}(T^*\partial \Omega \times \mathrm{neigh\,}(i{\bf
  R}^-_{y_n^*},{\bf C}_{y_n^*}))\\ &=
\{(x',t,{x'}^*,0);\, x',(x^*)'\in {\bf R}^{n-1},\, t\in
\mathrm{neigh\,}({\bf R}^+,{\bf C}) \}.
\end{split}
\end{equation}

\section{Some function spaces and their FBI-transforms}\label{fus}
\setcounter{equation}{0}

We continue to
work locally near a point $x_0$ where the boundary is analytic and we
use analytic coordinates $y$ centered at $x_0$ as specified in the
beginning of Section \ref{fop}.

\par
We start by defining some piecewise smooth I-Lagrange manifolds. 
\begin{itemize}
\item The cotangent space $T^*\Omega $ that we identify with 
$(\mathrm{neigh\,}(0)\cap {\bf R}^n_+)\times {\bf R}^n $.
\item The real conormal bundle $N^*\partial \Omega \subset T^*{\bf
    R}^n$. In the local coordinates $y$,
$$N^*\partial \Omega =\{(y,\eta )\in {\bf R}^{2n};\, y_n=0,\, \eta '=0
\} .$$ It will sometimes be convenient to write $N^*\partial \Omega
=\partial \Omega \times {\bf R}$ where of course the second
expression appeals to the use of special coordinates as above. More
invariantly, $N^*\partial \Omega $ is the inverse image
of the zero-section in $T^*\partial \Omega $ for the natural
projection map $\pi _{T^*\partial \Omega }:\, T^*_{\partial \Omega
}{\bf R}^n\to T^*\partial \Omega $.
\end{itemize}

\par We will also need some complex sets. 
\begin{itemize}
\item The complexified zero section in the complexification
  $\widetilde{T^*{\bf R}^n}={\bf C}^n_y\times {\bf C}^n_\eta $ is
  defined to be
$$
\mathrm{neigh\,}(0,{\bf C}^n)\times \{\eta =0\}\subset
{\bf C}_y^n\times {\bf C}^n_\eta .
$$
We denote it by ${\bf C}^n_y\times 0_\eta $ for short.
\item The complexification $\widetilde{N^*\partial \Omega }$ of
  $N^*\partial \Omega $ is defined to be 
$$
\{(y,\eta )\in {\bf C}^n_y\times {\bf C}^n_\eta ;\, y\in
\mathrm{neigh\,}(0,{\bf C}^n),\, y_n=0,\, \eta '=0 \}.
$$
\item The space $\pi ^{-1}(T^*\partial \Omega )$, where $\pi
  :T^*_{\partial \Omega}{\bf R}^n\otimes {\bf C}\to T^*\partial \Omega
  \otimes {\bf C}$ is the natural projection and $\otimes {\bf C}$
  indicates fiberwise complexification. In special coordinates it is
  $\{(y,\eta );\, (y',\eta ')\in {\bf R}^{2(n-1)},\, y_n=0,\, \eta
  _n\in {\bf C} \}$. We will denote it by $T^*\partial \Omega \times
  {\bf C}$ or $T^*\partial \Omega \times {\bf C}_{\eta _n}$ for
  simplicity. It contains the subset $T^*\partial \Omega \times {\bf
    C}^-_{\eta _n}$ (easy to define invariantly), where ${\bf C}^-$ is
  the open lower half-plane. Notice that
$$
T^*\partial \Omega \times \partial {\bf C}_-=T^*\partial \Omega \times
{\bf R}=T_{\partial \Omega }^*{\bf R}^n.
$$
\item The piecewise smooth (Lipschitz) manifold 
$$F=\overline{T^*\Omega }\cup (T^*\partial \Omega \times {\bf C}_{\eta _n}^-).$$
Notice that the two components to the right have $T^*_{\partial \Omega }{\bf
  R}^n$ as
their common boundary.
\item The piecewise smooth (Lipschitz) manifold $({\bf C}_y^n\times
  0_\eta )\cup \widetilde{N^*\partial \Omega }$ where the two
  constituents contain $\widetilde{\partial \Omega }\times 0_\eta
  $. Here $\widetilde{\partial \Omega } $ denotes a complexification of the
  boundary (near $x_0$).
\end{itemize}

Let
\begin{equation}\label{fus.1}
Tu(z;h)=Ch^{-\frac{3n}{4}}\int_{{\bf R}^n} e^{\frac{i}{h}\phi
  (z,y)}u(y)dy,\ z\in {\bf C}^n,
\end{equation}
be a standard FBI transform (\cite{Sj82}), sending distributions with
compact support on ${\bf R}^n$ to holomorphic functions on (in general
some subdomains of) ${\bf C}^n$. For simplicity we let $\phi $ be a
holomorphic quadratic form so that $T$ can also be viewed as a generalized
Bargmann transform and a metaplectic Fourier integral operator (see
for instance \cite{Sj90}). We
work under the standard assumptions
\begin{equation}\label{fus.2}
\Im \phi ''_{y,y}>0,\quad \det \phi ''_{z,y}\ne 0.
\end{equation}
We let $C>0$ be the unique positive constant for which $T:\,L^2({\bf
  R}^2)\to H_{\Phi _0}({\bf C}^n)$ is unitary, where 
\begin{equation}\label{fus.3}
\Phi _0(z)=\sup_{y\in {\bf R}^n}-\Im \phi (z,y)=-\Im \phi (z,y(z))
\end{equation}
is a strictly pluri-subharmonic (real) quadratic form on ${\bf C}^n$
and $H_{\Phi _0}$ is the complex Hilbert space $\mathrm{Hol\,}({\bf
  C}^n)\cap L^2(e^{-2\Phi _0/h}L(dz))$, $L(dz)$ denoting the Lebesgue
measure on ${\bf C}^n\simeq {\bf R}^{2n}$. { In Example
  \ref{fus1} we will discuss the case of a standard Bargmann transform
  where $\phi (z,y)=i(z-y)^2/2$ and compute the various weights $\Phi
  _0,\,\Phi _1,...$.}

\par Let 
\begin{equation}\label{fus.4}
\kappa _T:\, {\bf C}^{2n}\ni (y,-\phi '_{y}(z,y))\mapsto (z,\phi
'_z(z,y))\in {\bf C}^{2n}
\end{equation}
be the complex (linear) canonical transformation associated to $T$ and
let $\Lambda _{\Phi _0}=\{(z,\frac{2}{i}\frac{\partial \Phi
  _0}{\partial z}(z));\, z\in {\bf C}^n \}$ be the R-symplectic\footnote{
i.e. symplectic with respect to $\Re \sigma $, where $\sigma =d\zeta \wedge
dz $ is the complex symplectic form} and I-Lagrangian\footnote{i.e. Lagrangian with respect to $\Im \sigma $}
manifold of ${\bf C}^{2n}$, actually a real-linear subspace since
$\phi $ is quadratic. Then we know that 
\begin{equation}\label{fus.5}
\Lambda _{\Phi _0}=\kappa _T({\bf R}^{2n}).
\end{equation}
More explicitly,
\begin{equation}\label{fus.6}
\kappa _T^{-1}(z,\frac{2}{i}\frac{\partial \Phi _0}{\partial
  z})=(y(z),\eta (z))\in {\bf R}^{2n},
\end{equation}
where $y(z)$ appeared in (\ref{fus.3}).

\par In this paper, we shall deal with FBI-transforms and $H_\Phi $
locally and we recall some definitions and facts from
\cite{Sj82}. The local $H_\Phi $-spaces are defined in Chapter 1 of
that work:

\par Let $\Omega \subset {\bf C}^n$ be an open set, $\Phi :\Omega \to
{\bf R}$ a continous function. A function $u(z;h)$ on $\Omega \times
]0,h_0]$, $0<h_0\le 1$, is said to belong to $H_\Phi
^\mathrm{loc}(\Omega )$ if
\begin{equation}\label{hphi.1}
u \hbox{ is holomorphic in }z\hbox{ for each }h\in ]0,h_0],
\end{equation}
\begin{equation}\label{hphi.2}\begin{split}&\hbox{For every compact set
  }K\subset\Omega \hbox{ and every }\epsilon >0,\\ &\hbox{there exists a
  constant }C=C_{\epsilon ,K}>0,\hbox{ such that}\\
&|u(z;h)|\le C{e^{\frac{1}{h}(\Phi (z)+\epsilon )},}\
(z,h)\in K\times ]0,h_0].
\end{split}
\end{equation}
In general, we shall not {distinguish} between two elements $u,v\in
H_{\Phi }^\mathrm{loc}(\Omega )$, if their difference is exponentially small
relative to $e^{\Phi (z)/h}$. More precisely, if $u,v\in
H_{\Phi }^\mathrm{loc}(\Omega )$ we say that they are equivalent
($u\sim v$) if there exists a continous function $\widetilde{\Phi
}<\Phi $ on $\Omega $, such that 
\begin{equation}\label{hphi.3}
u-v\in H_{\widetilde{\Phi }}^\mathrm{loc}(\Omega ).
\end{equation}
This is clearly an equivalence relation and sometimes we do not
distinguish between $H_{\Phi }^\mathrm{loc}(\Omega )$ and the
corresponding set of equivalence classes. 

It will also be convenient to work with germs of $H_\Phi
$-functions. If $z_0\in {\bf C}^n$ and $\Phi \in
C(\mathrm{neigh\,}(z_0,{\bf C}^n);{\bf R})$, then by definition, an
element $u\in H_{\Phi ,z_0}$ is an element $u\in H_\Phi
^\mathrm{loc}(\Omega )$, where $\Omega $ is a neighborhood of
$z_0$. We say that $u,v\in H_{\Phi ,z_0}$ are equivalent, $u\sim v$,
if they are equivalent in $H_{\Phi }^\mathrm{loc}(W)$ for some
neighborhood $W$ of $z_0$.

The corresponding microlocal version of FBI-transforms is then the
following: Let $\phi \in
\mathrm{Hol\,}(\mathrm{neigh\,}((z_0,y_0),{\bf C}^{2n})$ satisfy
(\ref{fus.2}) at the point $(z_0,y_0)$. Also assume that 
\begin{equation}\label{hphi.4}
\eta _0':=-\phi '_y(z_0,y_0)\in {\bf R}^n.
\end{equation}
Then (cf.(\ref{fus.3})) we can {define} $\Phi _0\in C^\infty
(\mathrm{neigh\,}(z_0,{\bf C}^n);{\bf R})$ by 
\begin{equation}\label{hphi.5}
\Phi _0(z)=\sup_{y\in \mathrm{neigh\,}(y_0,{\bf R}^n)}(-\Im \phi (z,y)),
\end{equation}
and $\Phi _0$ becomes strictly plurisubharmonic. As after
(\ref{fus.3}), we can define the canonical transformation 
$$
\kappa _T:\mathrm{neigh\,}((y_0,\eta _0),{\bf C}^{2n})\to
\mathrm{neigh\,}((z_0,\zeta _0),{\bf C}^{2n}),
$$
where $\zeta _0=\phi '(z_0,y_0)=\frac{2}{i}\partial _z\Phi_0 (z_0)$
and we have natural local versions of (\ref{fus.5}). If $u\in {\cal
  D}'(\mathrm{neigh\,}(y_0,{\bf R}^n))$ is independent of $h$ or more
generally $h$-dependent but of temperate growth in ${\cal D}'$ as a
function of $h$, then by throwing in a cutoff $\chi \in C_0^\infty
(\mathrm{neigh\,}(y_0,{\bf R}^n))$ with $y_0\not\in
\mathrm{supp\,}(1-\chi )$ into the formula (\ref{fus.1}), we can
define $Tu\in H_{\Phi _0,z_0}$ up to $\sim$. 

\par We now return to the FBI-transform (\ref{fus.1}) with
quadratic phase.
Let \begin{equation}\label{fus.6.5}\Phi
_1^\mathrm{ext}(z)=\sup_{y\in \partial {\bf R}_+^n}-\Im \phi
(z,y)=-\Im \phi (z,\widetilde{y}(z)), \end{equation}
where $\widetilde{y}(z)=(\widetilde{y}'(z),0)$ and $\widetilde{y}'(z)$
is the unique point of maximum in ${\bf R}^{n-1}$ of $y'\mapsto -\Im
\phi (z,y',0)$.
If $\mathrm{supp\,}u\subset \{y\in {\bf R}^n;\, y_n\ge 0 \}$, then
$Tu\in H_{\Phi _1}^\mathrm{loc}$, where
\begin{equation}\label{fus.7}
\Phi _1(z)=\sup_{y\in{\bf R}_+^n}-\Im \phi (z,y)=\begin{cases}
\Phi _0(z),\hbox{ if }y_n(z)\ge 0,\\
\Phi _1^\mathrm{ext}(z),\hbox{ if }y_n(z)\le 0.
\end{cases} 
\end{equation}
Notice that 
\begin{itemize}
\item $-\Im \partial _{y_n}\phi (z,\widetilde{y}(z))\ge 0$ in the
  first case,
\item $-\Im \partial _{y_n}\phi (z,\widetilde{y}(z))\le 0$ in the
  second case.
\end{itemize}
Moreover,
$$
\frac{2}{i}\frac{\partial \Phi _1}{\partial
  z}(z)=\frac{2}{i}(\frac{\partial }{\partial z}(-\Im \phi )
)(z,\widetilde{y}(z))=\phi '_z(z,\widetilde{y}(z))
$$
and
$
\widetilde{\eta }(z)=-\phi '_y(z,\widetilde{y}(z))
$
satisfies $\widetilde{\eta }'(z)\in {\bf R}^{n-1}$. When $\Phi
_1(z)=\Phi _1^\mathrm{ext}(z)$ we have
\begin{equation}\label{fus.8}\widetilde{\eta }'(z)\in {\bf
    R}^{n-1},\ \Im \widetilde{\eta }_n(z)\le 0. \end{equation}
This means that
$$
\Lambda _{\Phi _1^\mathrm{ext}}=\kappa _T(T^*\partial \Omega \times
{\bf C}_{\eta _n}),
$$
and that
\begin{equation}\label{fus.9}
\Lambda _{\Phi _1}=\kappa_T (F),
\end{equation}
near $(z,\frac{2}{i}\partial _z\Phi _1(z))$ in case of strict
inequality in (\ref{fus.8}). Here the Lipschitz manifold
$F$ was defined above,
\begin{equation}\label{fus.10}
F=\overline{T^*(\Omega )}\cup \{(y',0;\eta ',\eta _n);\, (y',\eta
')\in T^*\partial \Omega ,\ \Im \eta _n\le 0 \}.
\end{equation}
The second component is a union of
complex half-lines, consequently in the region where $\Phi _1<\Phi_0 $,
$\Lambda _{\Phi _1}$ is a union of complex half-lines. If we project these
lines to the complex $z$-space we get a foliation of ${\bf C}^n_z$
into complex half-lines and the restriction of $\Phi _1$ to each of
these is harmonic. We have the corresponding local statements.

\par If $y_0=(y_0',0)\in \partial {\bf R}_+^n,\ (y_0,\eta _0)\in F$
and $z_0=\pi _z\kappa _T(y_0,\eta _0)$, then for $u\in {\cal
  D'}(\mathrm{neigh\,}(y_0,{\bf R}^n))$ with $\mathrm{supp\,}u\subset
{\bf R}_+^n$, we have that $Tu$ is well-defined up to equivalence in
$H_{\Phi _1,z_0}$.

\par We introduce the real hyperplane
$$
H=\pi _z\kappa _T(T^*_{\partial \Omega }{\bf R}^n),
$$
which is the common boundary of the two half-spaces
$$
H_+=\pi _z\kappa _T(T^*\Omega ),
$$
$$
H_-=\pi _z\kappa _T(\{(y',0;\eta);\, (y',\eta
')\in T^*\partial \Omega ,\ \Im \eta _n< 0 \}).
$$
Here, $\pi _z:{\bf C}_z^n\times {\bf C}^n_\zeta \to {{\bf C}_z^n}$ is the
natural projection.
We have
\begin{equation}\label{fus.11}
\Phi _0-\Phi _1\begin{cases}=0\hbox{ in }H_+,\\
\asymp \mathrm{dist\,}(z,H)^2\hbox{ in }H_-.
\end{cases}
\end{equation}

\par Similarly, recall the definition of the complexified normal
bundle $\widetilde{N^*\partial \Omega }$ at the beginning of this
section. It is a ${\bf C}$-Lagrangian manifold.\footnote{i.e.~a
  holomorphic manifold which is Lagrangian for the complex symplectic
  form $\sigma $.} We have 
$\kappa _T(\widetilde{N^*\partial \Omega })=\Lambda _{\Phi _3}$, where
$\Phi _3$ is pluriharmonic and given by 
$$
\Phi _3(z)=\mathrm{v.c.}_{y'\in {\bf C}^{n-1}}(-\Im \phi (z,y',0)),
$$
$\mathrm{v.c.}$ $=$ ``critical value of''.

\par Similarly $\kappa _T({\bf C}_y^n\times 0_\eta )$ (with the notation
from the beginning of this section) is of the form $\Lambda _{\Phi
  _4}$, where
$$
\Phi _4(z)=\mathrm{v.c.}_{y\in {\bf C}^n}(-\Im \phi (z,y)).
$$
The complex zero-section ${\bf C}_y\times 0_\eta $ and $T^*{\bf
  R}^n$ intersect transversally along the real zero-section ${\bf
  R}^n_y\times 0_\eta $. Correspondingly, we check that 
\begin{equation}\label{fus.12}
\Phi _0(z)-\Phi _4(z)\asymp \mathrm{dist\,}(z,\pi _z\circ \kappa
_T({\bf R}^n\times 0_\eta ))^2.
\end{equation}

\par Similarly, 
\begin{equation}\label{fus.13}
\Phi _1^\mathrm{ext}(z)-\Phi _3(z)\asymp \mathrm{dist\,}(z,\pi _z\circ
\kappa _T((\partial \Omega \times 0)\times {\bf C}^*_{\eta _n}))^2,
\end{equation}
where $\partial \Omega \times 0$ denotes the zero section in
$T^*\partial \Omega $, so that 
$$(\partial \Omega \times 0)\times {\bf C}^*_{\eta _n}=N^*\partial
\Omega \otimes {\bf C}$$ is the fiber-wise complexification of
$N^*\partial \Omega $. (Here we work locally near $y=0$.)

\par Let $u$ be real-analytic in a neighborhood of $y_0\in \partial
{\bf R}_+^n$ and consider
\begin{equation}\label{fus.14}
v(z)=T(1_\Omega u)(z),
\end{equation}
where we restrict the attention to $z\in {\bf C}^n$ such that the
critical point $y_{\Phi _4}(z)$ in the definition of $\Phi _4(z)$
belongs to a small complex neighborhood of $y_0\in \partial {\bf R}_+^n$ or
equivalently to $z\in {\bf C}^n$ in a small neighborhood of $\kappa
_T(\{ y_0\}\times 0_\eta )$. By the ``m\'ethode du col''{\footnote{ In
  the present situation this means that we deform the integration
  contour so that its boundary remains in the the complex hyperplane
  $y_n=0$ and so that the supremum of the modulus of the exponential in the integral
  is as small as possible, see \cite{Sj82} and Example \ref{fus1} below.}} we
see that $v\in H_{\Phi _5}^\mathrm{loc}$, where first of all $\Phi
_5\le \Phi _1$ and further,
\begin{equation}\label{fus.15}
\Phi _5(z)=\Phi _4(z),\hbox{ when }\begin{cases}\Re y_{\Phi _4}(z)\in
  \Omega \hbox{ and}\\
|\Im y_{\Phi _4}(z)|\ll \mathrm{dist\,}(\Re y_{\Phi _4}(z),\partial
\Omega ),
\end{cases}
\end{equation}
\begin{equation}\label{fus.16}
  \Phi _5(z)=\Phi _3(z),\hbox{ when }\begin{cases}\Re y_{\Phi _4}(z)
    \not\in
    \Omega \hbox{ and}\\
    |\Im y_{\Phi _4}(z)|\ll \mathrm{dist\,}(\Re y_{\Phi _4}(z),\partial
    \Omega ).
\end{cases}
\end{equation}
Actually, in the last case we can relax the condition that
$y_{\Phi _4}(z)$ belongs to a small ($u$-dependent) neighborhood of
$\overline{\Omega }$. The appropriate restriction is then that the
critical point $y_{\Phi _3}(z)\in \widetilde{\partial \Omega }$ in the
definition of $\Phi _3$ belongs to a small ($u$-dependent)
neighborhood of $\partial \Omega $.
{
\begin{ex}\label{fus1}{\rm
Let $$\phi (z,y)=\frac{i}{2}(z-y)^2.$$ The associated canonical
transformation is given by
$$
\kappa _ T:\, (y,i(z-y))\mapsto (z,i(z-y)),
$$
or more explicitly, by
\begin{equation}\label{fus.17}
\kappa _T:(y,\eta )\mapsto (y-i\eta ,\eta ).
\end{equation}
We get
\begin{equation}\label{fus.18}
\Phi _0(z)=\frac{1}{2}(\Im z)^2,\ \Lambda _{\Phi _0}:\, \zeta =-\Im
z,\ y(z)=\Re z.
\end{equation}
\begin{equation}\label{fus.19}
\Phi _1^\mathrm{ext}(z)=\frac{1}{2}(\Im z)^2-\frac{1}{2}(\Re z_n)^2,
\end{equation}
\begin{equation}\label{fus.20}
\Phi _1(z)=\begin{cases}\frac{1}{2}(\Im z)^2,\ &\Re z_n\ge 0,\\
\frac{1}{2}(\Im z)^2 -\frac{1}{2}(\Re z_n)^2,\ &\Re z_n\le 0.
\end{cases}
\end{equation}
\begin{equation}\label{fus.21}
\begin{split}
F=&\{ (y,\eta )\in {\bf R}^{2n};\, y_n\ge 0 \}\cup \\
&\{(y,\eta )\in {\bf
  R}^n\times {\bf C}^n;\, y_n\le 0,\ \eta '\in {\bf R}^{n-1},\ \Im
\eta _n\le 0 \} .
\end{split}
\end{equation}
\begin{equation}\label{fus.22}
H=\{ z\in {\bf C}^n;\, \Re z_n=0 \},\ H_\pm =\{z\in {\bf C}^n;\, \pm
\Re z_n>0 \} .
\end{equation}
\begin{equation}\label{fus.23}
\Phi _3(z)=\frac{1}{2}(\Im z_n)^2-\frac{1}{2}(\Re z_n)^2.
\end{equation}
\begin{equation}\label{fus.24}
\Phi _4(z)=0.
\end{equation}
(\ref{fus.13}) reads,
$$
\Phi _1^\mathrm{ext}(z)-\Phi _3(z)\asymp (\Im z')^2,
$$
since
$$
\pi _z\kappa _T((\partial \Omega \times 0)\times {\bf C}_{\eta
  _n}^*)=\{z;\, \Im z'=0 \}.
$$

\par The definition of $\Phi _5$ depends on the domain to which $u$
extends holomorphically. Assume $u$ extends to a bounded holomorphic
function in $\{y\in {\bf C}^n;\, |\Im y'|<\epsilon ,\ |\Im
y_n|<\epsilon ,\ \Re y_n>0 \}$. (Later on, we will need only that $u$
is defined for $|\Im y|<\epsilon $, $0<\Re y_n<1/C$ and the
conclusion about $\Phi _5$ will then hold in a suitable smaller
domain.)  Let us first discuss the theoretical case when $\epsilon
=+\infty $. The function $y\mapsto \phi (z,y)$ has a unique critical
point $y=z$ and this point is a saddle point for
$$
-\Im \phi (z,y)=\frac{1}{2}(\Im z-\Im y)^2-\frac{1}{2}(\Re z-\Re y)^2.
$$
We want to deform the contour ${\bf R}_{y'}^{n-1}\times [0,+\infty [$
to a new contour with boundary contained in $y_n=0$, passing through
the critical point and remaining in the region $-\Im \phi (z,y)\le
0$. As for the $y'$-variables we make a simple translation from ${\bf
  R}_{y'}^{n-1}$ to $z'+{\bf R}_{y'}^{n-1}$, so we can concentrate on
the one-dimensional problem for the $y_n$-variable. 
\begin{itemize}
\item When $\Re z_n\ge
|\Im z_n|$ this is possible and we get $\Phi _5=0$. Indeed the contour
will start at $0$ which is in one of the two valleys, emanating from the saddle point, then reach the
saddle point $y_n=z_n$ and can then continue in the other valley and
join the positive real axis. 
\item When $0< \Re z_n<|\Im z_n|$, the point $0$ is situated on one of the
  mountain sides above the saddle point and there is no point in
  descending to that point. Assume, to fix the ideas, that $\Re
  z_n>0$, then we can take as contour the level curve of
  $\frac{1}{2}\left( (\Im z_n-\Im y_n)^2-(\Re z_n-\Re y_n)^2\right)$
  starting at $y_n=0$ which enters the upper half-plane and follow it
  until it hits $]0,+\infty [$, and then follow the positive half-axis
  in the increasing direction. We get $\Phi
  _5(z)=\frac{1}{2}\left((\Im z_n)^2-(\Re z_n)^2 \right)$.
\item When $\Re z_n\le 0$, we can take the contour $[0,+\infty [$ in
  the $y_n$-plane and reach the same conclusion.
\end{itemize}

\par When $0<\epsilon <+\infty  $, we can still describe $\Phi _5$
everywhere on ${\bf C}^n$, but content ourseleves with the observation
that the above description remains valid in the union of the following regions: 
\begin{itemize}
\item $|\Im z'|< \epsilon $ and $\Re z_n<0$,
\item $|\Im z'|< \epsilon $, $|\Im z_n|< \epsilon $  and $\Re z_n\ge0$.
\end{itemize}

}
\end{ex}}

\section{Expressing $M$ with the help of FBI-transforms}\label{fbi}
\setcounter{equation}{0}
From now on we work with $M_\mathrm{new}$, $\Phi _{M_\mathrm{new}}$,
$\kappa _\mathrm{new}$ and we drop  the corresponding subscript ``new''. Then
(\ref{fop.17}) reads
\begin{equation}\label{fbi.1}
Mq(x',t)=\frac{1}{(2\pi h)^{n-1}}\iint e^{\frac{i}{h}{\Phi_M} (x',t,y,\eta
')} a(y,-\eta ';h)a(y,t\eta _0';h) q(y)dyd\eta '.
\end{equation}
with ${\Phi_M} $ given in (\ref{fop.18}). 

We want to express $Mq$ with the help
of $Tq$, where $T$ is as in (\ref{fus.1}) and we start by recalling
some general facts about metaplectic Fourier integral operators of
this form, following \cite{Sj82} for the local theory, and \cite{Sj90}
for the simplified global theory in the metaplectic frame work
(i.e. all phases are quadratic and all amplitudes are constant). To
start with, we weaken the assumptions on the quadratic phase in $T$ and
assume only that $\phi (x,y)$ is a holomorphic quadratic form on ${\bf
  C}^n\times {\bf C}^n$,
satisfying the second part of (\ref{fus.2}):
\begin{equation}\label{fbis.2}
\det \phi ''_{x,y}(x,y)\ne 0.
\end{equation}
To $T$ we can still associate a linear canonical transformation $\kappa_
T$ as in (\ref{fus.4}).
Let $\Phi _1$, $\Phi _2$ be plurisubharmonic quadratic forms on ${\bf
  C}^n$ related by 
\begin{equation}\label{fbis.3}
\Lambda _{\Phi _2}=\kappa _T(\Lambda _{\Phi _1})
\end{equation}
Then we can define $T:H_{\Phi _1}\to H_{\Phi _2}$ as a bounded
operator as in (\ref{fus.1}) with the modification that ${\bf R}^n$
should be replaced by a so called good contour, which is an affine
subspace of ${\bf C}^n$ of real dimension $n$, passing through the
nondegenerate critical point $y_c(x)$ the function
\begin{equation}\label{fbis.4}
y\mapsto -\Im \phi (x,y)+\Phi _1(y)
\end{equation}
and along which this function is $\Phi _2(x)-(\asymp
|y-y_c(x)|^2)$. (Actually in this situation it would have been better
to replace the power $h^{-3n/4}$ by $h^{-n/2}$ since we would then get
a uniform bound on the norm.)

\begin{remark}\label{fbis1}{\rm Recall also that if only $\Phi _1$ is
    given as above, the existence of a quadratic form $\Phi _2$ as in
    (\ref{fbis.3}) is equivalent to the fact that (\ref{fbis.4}) has a
    nondegenerate critical point and the plurisubharmonicity of $\Phi
    _2$ is equivalent to the fact that the signature of the critical
    point is $(n,-n)$ (which represents the maximal number of negative
    eigenvalues of the Hessian of a plurisubharmonic quadratic
    form). This in turn is equivalent to the existence of an affine
    good contour as above.}\end{remark}

In this situation $T:H_{\Phi _1}\to H_{\Phi _2}$ is bijective with the inverse
\begin{equation}\label{fbi.2}
Sv(y)=T^{-1}v(y)=\widetilde{C}h^{-\frac{n}{4}}\int e^{-\frac{i}{h}\phi
(z,y)}v(z)dz,
\end{equation}
which can be realized the same way with a good contour and here the
constant $\widetilde{C}$ does not depend on the choice of $\Phi _j$, $j=1,2$.
\begin{remark}\label{fbis2}{\rm
Let us introduce the formal adjoints of $T$ and $S$,
$$T^\mathrm{t}v(y)=Ch^{-\frac{3n}{4}}\int_{{\bf R}^n} e^{\frac{i}{h}\phi
  (z,y)}v(z)dz,\ y\in {\bf C}^n,
$$
$$
S^\mathrm{t}u(z)=\widetilde{C}h^{-\frac{n}{4}}\int e^{-\frac{i}{h}\phi
(z,y)}u(y)dy.
$$
Let $\Psi _1$, $\Psi _2$ be pluri-subharmonic quadratic forms such
that $\kappa _{S^\mathrm{t}}(\Lambda _{\Psi _1})=\Lambda _{\Psi
  _2}$. Then as above, $T^\mathrm{t}:H_{\Psi _2}\to H_{\Psi _1}$, $S^\mathrm{t}:H_{\Psi _1}\to H_{\Psi _2}$ are
bijective and $S^\mathrm{t}=\mathrm{const.}(T^\mathrm{t})^{-1}$. We claim that
$S^\mathrm{t}$ is the inverse of $T^\mathrm{t}$. In fact, this
statement is independent of the choice of $\Phi _j$,
$\Psi _j$ as above and we
can choose them to be pluri-harmonic in such a way that $\Lambda
_{\Phi _j}$ intersects $\Lambda _{-\Psi _j}$ transversally for one
value of $j$ and then automatically for the other value. Then for $j=1,2$ we can define
$$\langle u|v\rangle = \int _{\gamma_j} u(x)v(x)dx,$$
for $u\in H_{\Phi _j}$, $v\in H_{\Psi _j}$ (or rather for functions
that are ${\cal O}(e^{\Phi _j/h})$ and $e^{\Psi _j/h}$ respectively,
the space of such functions is of dimension 1 which suffices for our
purposes) if we let $\gamma _j$ be a good contour for $\Phi _j+\Psi
_j$. For $u={\cal O}(e^{\Phi _2/h})$, $v={\cal O}(e^{\Psi _2/h})$ non
zero, we have
$$
0\ne \langle u|v\rangle=\langle TSu|v\rangle =\langle
Su|T^\mathrm{t}v\rangle
=\langle u|S^\mathrm{t}T^{t}v\rangle 
$$
and knowing already that $S^\mathrm{t}T^\mathrm{t}$ is a multiple of the
identity, we see that it has to be equal to the identity. 
}\end{remark}

Now return to the discussion of an FBI-transform $T$ whose phase
satisfies (\ref{fus.2}). When letting $T$ act on suitable $H_\Phi
$-spaces it has the inverse $S$ in (\ref{fbi.2}). However, if we let
$T$ act on $L^2({\bf R}^n)$ so that $Tu\in H_{\Phi _0}$ (with $\Lambda
_{\Phi _0}=\kappa _T({\bf R}^{2n})$), the best possible contour in
(\ref{fbi.2}) is
$$
\Gamma (y)=\{z\in {\bf C}^n;\, y(z)=y \}.
$$
This follows from the property
\begin{equation}\label{fbi.3}
\Phi _0(z)+\Im \phi (z,y)\asymp \mathrm{dist\,}(z,\Gamma (y))^2\asymp |y(z)-y|^2,
\end{equation}
so $\Phi _0(z)+\Im \phi (z,y)=0$ on $\Gamma (y)$ and
$e^{-\frac{i}{h}\phi (z,y)+\frac{1}{h}\Phi _0(z)}$ is bounded
there. This is not sufficient for a straight forward definition of
$Sv(y)$, $v\in H_{\Phi _0}$ since we would need some extra exponential
decay along the contour near infinity, but it does suffice to give a
precise meaning up to exponentially small errors of the formula
\begin{equation}\label{fbi.4}
\widetilde{T}u=(\widetilde{T}S)Tu,
\end{equation} 
in a local situation,
where $\widetilde{T}:L^2\to H_{\widetilde{\Phi }_0}$ is a second
FBI-transform and where $\widetilde{T}S:\, H_{\Phi _0}\to
H_{\widetilde{\Phi }_0}$ is defined by means of a good contour.
\begin{prop}\label{fbis3}
  Let $(y_0,\eta _0)\in {\bf R}^{2n}$, $(z_0,\zeta _0)=\kappa
  _T(y_0,\eta _0)$, $(w_0,\omega _0)=\kappa _{\widetilde{T}}(y_0,\eta
  _0)$. We realize $Tu$, $\widetilde{T}u$, $\widetilde{T}Su$ (modulo
  exponentially small terms) in $H_{\Phi _0,z_0}$, $H_{\widetilde{\Phi
    } _0,w_0}$, $H_{\widetilde{\Phi } _0,w_0}$ respectively, by
  choosing good contours restricted to neighborhoods of $y_0$, $y_0$,
  $z_0$ respectively. Then (\ref{fbi.4}) holds (modulo an
  exponentially small error) in $H_{\widetilde{\Phi } _0,w_0}$. Here
  $u\in {\cal D}'({\bf R}^n)$ is either independent of $h$ or of
  temperate growth in ${\cal D}'({\bf R}^n)$ as a function of $h$.  
\end{prop}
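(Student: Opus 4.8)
Morally (\ref{fbi.4}) is just $\widetilde{T}ST=\widetilde{T}$, since $ST=\mathrm{Id}$; the content is that, although $S$ by itself is not well defined on $H_{\Phi _0}$ (the $\Gamma (y)$-contour in (\ref{fbi.2}) lacks exponential decay at infinity), the composition $\widetilde{T}S:H_{\Phi _0}\to H_{\widetilde{\Phi }_0}$ is a genuine Gaussian (metaplectic) Fourier integral operator that can be realized with a good contour localized near $z_0$, and the prescribed localized realizations of $(\widetilde{T}S)(Tu)$ and of $\widetilde{T}u$ agree up to an exponentially small error. The plan is to make this precise with the local calculus of \cite{Sj82} (Ch.~1) and \cite{Sj90}. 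First I would note that the canonical transformation of $\widetilde{T}S$ is $\kappa _{\widetilde{T}}\circ \kappa _T^{-1}$ and that $\kappa _{\widetilde{T}}\kappa _T^{-1}(\Lambda _{\Phi _0})=\kappa _{\widetilde{T}}({\bf R}^{2n})=\Lambda _{\widetilde{\Phi }_0}$ by (\ref{fus.5}); hence, by Remark \ref{fbis1}, $\widetilde{T}S$ has a good affine contour, which we restrict to a neighbourhood of $z_0$. Its effective quadratic phase $\Psi (w,z)$ is obtained from the phases $\widetilde{\phi }$ of $\widetilde{T}$ and $\phi $ of $S$ by eliminating an intermediate variable $y'$ via the exact Gaussian critical relation $\partial _{y'}\widetilde{\phi }(w,y')=\partial _{y'}\phi (z,y')$, so $\Psi (w,z)=\widetilde{\phi }(w,y'(w,z))-\phi (z,y'(w,z))$.

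Next I would write $(\widetilde{T}S)(Tu)(w)$ as an iterated Gaussian integral, with $z$ running along the good contour near $z_0$ and $y$ over the fixed neighbourhood of $y_0$ prescribed for $T$, with total phase $\Psi (w,z)+\phi (z,y)$, the factor $u(y)$, and the relevant constants and powers of $h$. The main step is to perform the $z$-integral first, with $y$ fixed, by deforming the good contour to a steepest-descent contour through the critical point. The chain rule together with the $y'$-criticality gives $\partial _z\Psi (w,z)=-\phi '_z(z,y'(w,z))$, so the critical equation $\partial _z(\Psi +\phi )=0$ reduces to $\phi '_z(z,y'(w,z))=\phi '_z(z,y)$; since $\phi $ is quadratic with $\det \phi ''_{z,y}\ne 0$, this forces $y'(w,z)=y$, equivalently $z$ is the unique solution of $\partial _{y'}\widetilde{\phi }(w,y)=\partial _{y'}\phi (z,y)$, and the critical value of the phase collapses to $\widetilde{\phi }(w,y)-\phi (z,y)+\phi (z,y)=\widetilde{\phi }(w,y)$. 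Nondegeneracy of the critical point and the $(n,-n)$ signature required for the steepest-descent contour to be good are automatic, because $\widetilde{T}ST$ and $\widetilde{T}$ have the same canonical transformation. Carrying out the exact Gaussian $z$-integration thus returns, up to the expected power of $h$ and a harmless constant, $Ch^{-3n/4}\int _{\mathrm{neigh\,}(y_0,{\bf R}^n)}e^{i\widetilde{\phi }(w,y)/h}u(y)\,dy$, which is exactly the localized realization of $\widetilde{T}u$ in $H_{\widetilde{\Phi }_0,w_0}$ prescribed in the statement.

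Finally I would verify that each contour deformation and truncation above costs only an error that is exponentially small relative to $e^{\widetilde{\Phi }_0(w)/h}$ in $H_{\widetilde{\Phi }_0,w_0}$. The ingredients are the strict estimates of the type (\ref{fbi.3}), $\Phi _0(z)+\Im \phi (z,y)\asymp |y(z)-y|^2$, and their analogues for $\widetilde{\phi }$ and $\Psi $, which make the boundary and far-field contributions in the deformations exponentially small, together with the hypothesis that $u$ is independent of $h$ or of temperate growth in ${\cal D}'({\bf R}^n)$, which bounds the pairing of $u$ with Gaussians by a fixed negative power of $h$ and therefore lets the strict exponential gains dominate; this is also how one truncates the domain of integration ${\bf R}^n$ in $Tu$ and $\widetilde{T}u$ to neighbourhoods of $y_0$. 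I expect the only genuinely delicate part to be the bookkeeping of these deformations — keeping all intermediate contours good and the associated weights plurisubharmonic with the correct signature throughout the composition of the three operators — but this is precisely the local Gaussian-FIO calculus of \cite{Sj82}, \cite{Sj90}, so no new phenomenon arises.
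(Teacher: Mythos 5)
Your argument is correct in outline and reaches the same conclusion, but it is organized quite differently from the paper's proof and leaves one small loose end. The paper writes $(\widetilde{T}S)(Tu)(w)$ directly as a \emph{triple} Gaussian integral in $(y,z,x)$, uses that all good contours are homotopic to regroup the $(z,x)$-integration (with $y$ held fixed) as $T^{\mathrm t}S^{\mathrm t}$ applied to the coherent state $e^{\frac{i}{h}\widetilde{\phi}(w,\cdot)}$, and then invokes Remark \ref{fbis2} --- which establishes $T^{\mathrm t}S^{\mathrm t}=\mathrm{Id}$, constant and all --- to replace that inner expression by $e^{\frac{i}{h}\widetilde{\phi}(w,y)}$, leaving exactly the good-contour realization of $\widetilde{T}u$. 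You instead first eliminate the intermediate variable to form the effective quadratic phase $\Psi(w,z)$ of $\widetilde{T}S$, and then do a stationary-phase computation in $z$; your identity $\partial_z\Psi(w,z)=-\phi'_z(z,y'(w,z))$, the conclusion $y'(w,z)=y$ at the critical point, and the collapse of the critical value to $\widetilde{\phi}(w,y)$ are all correct. What the paper's route buys, by recycling Remark \ref{fbis2}, is that the multiplicative constant is pinned down to be exactly $1$ with no further work, whereas your write-up dismisses it as a ``harmless constant.'' That step actually carries content: a priori the exact Gaussian $z$-integration produces $(\widetilde{T}S)(Tu)=c\,\widetilde{T}u$ for some $c\ne 0$, and one still has to argue $c=1$, e.g.\ by appealing to $ST=\mathrm{Id}$ (which you mention at the start but do not tie back in) or, equivalently, to the Remark \ref{fbis2} computation. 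With that one sentence supplied, your proof is complete and is a legitimate, more explicitly computational alternative to the paper's regrouping argument; your final paragraph on tracking exponentially small errors from contour truncations is also consistent with the (terser) remark at the end of the paper's proof.
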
 
\begin{proof}
The left hand side of (\ref{fbi.4}) is 
$$\mathrm{Const.\,}h^{-\frac{3n}{4}-n}\iiint
e^{\frac{i}{h}(\widetilde{\phi }(w,x)-\phi (z,x)+\phi (z,y))}u(y) dy
dz dx$$
where the composed contour is good,
and all good contours being homotopic, we can write it as
$$
\widetilde{C}h^{-\frac{3n}{4}}\int \left( \mathrm{Const.\,}h^{-n}\iint
  e^{\frac{i}{h}(-\phi (z,x)+\phi (z,y))}e^{\frac{i}{h}\widetilde{\phi
    }(w,x)} dxdz \right)u(y) dy.
$$
The expression in the big parenthesis is nothing but
$T^\mathrm{t}S^\mathrm{t}(e^{\frac{i}{h}\widetilde{\phi }(w,\cdot
  )})(y)$, which by Remark \ref{fbis2} is equal to
$e^{\frac{i}{h}\widetilde{\phi }(w,y)}$ and (\ref{fbi.4}) follows. (In
the proof we have chosen not to spell out the various exponentially
small errors due to the fact that the integration contours are
confined to various small neighborhoods of certain points.)
\end{proof}

We now return to the operator $M$ in (\ref{fbi.1}). Choose adapted
analytic coordinates centered at $x_0$ as in the beginning of Section
\ref{fop}. In that section (cf (\ref{fop.21})) we have seen that there
is a well defined canonical transformation $\kappa _M$ from a
neighborhood of $(0,0,-i)\in {\bf C}^{2n}_{y,\eta }$ to a neighborhood
of $(0,1,0,0)$ in ${\bf C}_{x'}^{n-1}\times {\bf C}_t\times {\bf
  C}_{{x'}^*}^{n-1}\times {\bf C}_{t^*}$ mapping $T^*\partial \Omega
\times i{\bf R}_-$ to ${\bf R}_{x'}^{n-1}\times {\bf R}_t\times {\bf
  R}_{{x'}^*}^{n-1}\times \{ t^*=0 \}$. This means that we have a
microlocal description of $Mq$ near $(0,1,0,0)$ and not a local one
near $x'=0$, {$t=1$}. We shall therefore microlocalize in $(x',{x'}^*)$
by means of an FBI-transform in the $x'$-variables.

Let
\begin{equation}\label{fbi.10}
  \widehat{T}u(w')=\widehat{C}h^{\frac{1-n}{2}}\int_{{\bf
      R}^{n-1}} e^{\frac{i}{h}\widehat{\phi }(w',x')}u(x')dx',\ w'\in
  {\bf C}^{n-1}
\end{equation}
be a second FBI-transform as in (\ref{fus.1}) though acting in $n-1$
variables and with a different normalization. Assume (to fix the ideas) that
\begin{equation}\label{fbi.11}
  \kappa _{\widehat{T}}({\bf C}^{n-1}_{x'}\times \{0 \})={\bf
    C}_{w'}^{n-1}\times \{0 \} .
\end{equation}
Then 
\begin{equation}\label{fbi.12}
\kappa _{\widehat{T}}(T^*{\bf R}^{n-1})=\Lambda _{\widehat{\Phi }_0},
\end{equation}
where $\widehat{\Phi }_0$ is a strictly plurisubharmonic quadratic
form. 

\par By slight abuse of notation we also let $\widehat{T}$ act on
functions of $n$ variables by
$$
\widehat{T}(u)(w',t)=(\widehat{T}u(\cdot ,t))(w').
$$

The presence of $\widehat{T}$ leads to a formula for $\widehat{T}M$
that is simpler than the one for $M$ in
(\ref{fbi.1}). 
\[
\begin{split}
\widehat{T}Mq(w',t)&=\widehat{T}\left( e^{-\frac{i}{h}(\cdot )\cdot
    t\eta _0'}K^\mathrm{t}qK\left(e^{\frac{i}{h}(\cdot )\cdot t\eta _0'}
  \right) \right)(w') =\\
\widehat{C}h^{\frac{1-n}{2}}&\iiint e^{\frac{i}{h}(\widehat{\phi
  }(w',\widetilde{x}')-\widetilde{x}'\cdot t\eta
  _0')}K(y,\widetilde{x}')q(y)K(y,x')e^{\frac{i}{h}x'\cdot t\eta
  _0'}dx'dyd\widetilde{x}'\\
&=\int K\left(e^{\frac{i}{h}(\widehat{\phi }(w',\cdot )-(\cdot )\cdot
    t\eta _0')} \right)(y)q(y)K\left(e^{\frac{i}{h}(\cdot )\cdot t\eta _0'}
\right)(y)dy .
\end{split}
\]
Up to exponentially small errors we have (cf.~(\ref{fop.4.5}))
$$
K\left(e^{\frac{i}{h}(\cdot )\cdot t\eta _0'}
\right)(y)=e^{\frac{i}{h}\phi (y,t\eta _0')}a(y,t\eta _0';h)
$$
and
$$
K\left( e^{\frac{i}{h}\widehat{\phi }(w',\cdot )}
\right)(y)=e^{\frac{i}{h}\widetilde{\psi }(w',t\eta _0',y)}b(w',y,t\eta _0';h),
$$
where $b$ is an elliptic analytic symbol of order $0$ and
{$\widetilde{\psi } $} is the solution of the
following eikonal equation in $y$,
$$
\partial _{y_n}\widetilde{\psi  }=ir(y,\partial _{y'}{\widetilde{\psi }}
)^{\frac{1}{2}},\quad {{\widetilde{\psi
    }}}_{\vert y_n=0}=\widehat{\phi }(w',y')-y'\cdot t\eta _0'.
$$
Thus, up to exponentially small errors, we get for $q\in L^\infty
(\Omega )$,
\begin{equation}\label{fbis.5}\begin{split}
    &\widehat{T}Mq(w',t)=\int e^{\frac{i}{h}\psi
      (w',t,y)}c(w',t,y;h)q(y)dy,\\ &(w',t)\in
    \mathrm{neigh\,}((0,1),{\bf C}^{n-1}\times {\bf C}),\end{split}
\end{equation}
where $c$ is an elliptic analytic symbol of order $0$ and 
$$
\psi (w',t,y)=\widetilde{\psi }(w',t,y)+\phi (y,t\eta _0')
$$
satisfies
\begin{equation}\label{fbis.6}
{{\psi }_\vert}_{y_n=0}=\widehat{\phi }(w',y'),
\end{equation}
\begin{equation}\label{fbis.7}
{{\partial _{y_n}\psi }_\vert}_{y_n=0}=i\left( r\left( y',0,\partial
  _{y'}\widehat{\phi }(w',y')-t\eta _0'
\right)^{\frac{1}{2}}+r(y',0,t\eta _0')^{\frac{1}{2}}\right).
\end{equation}

\par Assume for simplicity that $r(0,0,\eta _0')=1/4$. Then, at
the point $w'=0$, $t=1$, $y=0$, we have
$$
(\partial _{w'}\psi ,\partial _t\psi ,-\partial _{y'}\psi ,-\partial
_{y_n}\psi )=(0,0,0,-i),
$$
so $\kappa _{\widehat{T}M}(0,0,-i)=(0,1,0,0)$\footnote{We can verify
  directly that $\det \partial _{w',t}\partial _y\psi \ne 0$.}. Also, $\kappa
_{\widehat{T}M}=\kappa _{\widehat{T}}\circ \kappa _M$ and
\[
\begin{split}
& \kappa _M(0,0,-i)=(0,1,0,0),\\
&\kappa _{\widehat{T}}(0,1,0,0)=(0,1,0,0).
\end{split}
\]
Recall from (\ref{fop.21}) that
\[\begin{split}
  &\kappa _M:\, \mathrm{neigh\,}((0;0,-i),T^*\partial \Omega \times
  {\bf
    C}_{y_n^*}^-) \to\\
  &\mathrm{neigh\,}((0,1;0,0),{\bf R}_{x'}^{n-1}\times {\bf C}_t\times
  {\bf R}_{{x'}^*}^{n-1}\times \{t^*=0 \}),
\end{split}
\]
so
\[
\kappa _{\widehat{T}M}:\, \mathrm{neigh\,}((0,0,-i),T^*\partial \Omega
\times {\bf C}_{y_n^*}^-) \to \mathrm{neigh\,}((0,1,0,0),\Lambda
_{\widehat{\Phi }_0\oplus 0}).
\]
On the other hand, we have seen in Section \ref{fus} that $\kappa
_T(F)=\Lambda _{\Phi _1}$ and that the part $T^*\partial \Omega \times
{\bf C}_{y_n^*}^-$ of $F$ is mapped to $\Lambda _{\Phi
  _1^\mathrm{ext}}$. More locally,
\[
\begin{split}
&\kappa _T:\ \mathrm{neigh\,}((0,0,-i),T^*\partial \Omega \times {\bf
  C}_{y_n^*}^-) \to \mathrm{neigh\,}(\kappa _T(0,0,-i),\Lambda
_{\Phi _1^\mathrm{ext}})\\
&\kappa _S:\ \mathrm{neigh\,}(\kappa _T(0,0,-i),\Lambda
_{\Phi _1^\mathrm{ext}}) \to
\mathrm{neigh\,}((0,0,-i),T^*\partial \Omega \times {\bf
  C}_{y_n^*}^-).
\end{split}
\]
Using also (\ref{fop.21}), we get
\begin{equation}\label{fbi.19}
\kappa _{\widehat{T}MS}:\ \mathrm{neigh\,}(\pi _z\kappa
_T(0,0,-i),\Lambda _{\Phi _1^\mathrm{ext}})\to
\mathrm{neigh\,}((0,1,0,0),\Lambda _{\widehat{\Phi }_0\oplus 0}).
\end{equation}
We then also know that
$$\widehat{\Phi }_0(w')=\mathrm{vc}_{y,z}(-\Im \psi (w',t,y)+\Im \phi _T(z,y)).$$
This means (\cite{Sj82}) that the formal composition 
\begin{equation}\label{fbis.8}
\widehat{T}MSv(w',t)=\widetilde{C}h^{-\frac{n}{4}}\iint
e^{\frac{i}{h}(\psi (w',t,y)-\phi _T(z,y))}c(w',t,y;h)v(z)dzdy
\end{equation}
gives a well-defined operator
\begin{equation}\label{fbis.9}
\widehat{T}MS:\, H_{\Phi _1^\mathrm{ext},\pi _z\kappa _T(0,0,-i)}\to
H_{\widehat{\Phi }_0\oplus 0,(0,1)},
\end{equation}
that can be realized with the help of a good contour.

We shall next show that
\begin{equation}\label{fbis.10}
\widehat{T}Mu=(\widehat{T}MS)Tu \hbox{ in }H_{\widehat{\Phi }_0\oplus 0,(0,1)}
\end{equation} 
when $u$ is supported in $\{y_n\ge 0 \}$. The proof is the same as the
one for (\ref{fbi.4}). The right hand side in (\ref{fbis.10}) is equal
to 
$$
\mathrm{Const.}h^{-n}\iiint e^{\frac{i}{h}(\psi (w',t,x)-\phi
  _T(z,x)+\phi _T(z,y))}c(w',t,x;h) u(y) dydzdx,
$$ where the $y$-integration is over ${\bf R}^n_+$, and we may assume
without loss of generality, that $u$ has its support in a small
neighborhood of $y=0$. The $dzdx$ integration is, to start with, over the good
contour in (\ref{fbis.8}). This last integration can be viewed as
$T^\mathrm{t}S^\mathrm{t}$ acting on $e^{\frac{i}{h}\psi (w',t,\cdot
  )}c(w',t,\cdot ;h)$ and here $T^\mathrm{t}S^\mathrm{t}$ is the
identity operator, that can be realized with a good contour, so we get
$$
(\widehat{T}MS)Tu(w',t)=\int e^{\frac{i}{h}\psi (w',t,x)}c(w',t,x;h)u(x)dx=\widehat{T}Mu(w',t),
$$
and we have verified (\ref{fbis.10}).

\par Above, we have established (\ref{fbis.9}) as the quantum version
of (\ref{fbi.19}).
It follows by an easy adaptation of the exercise leading to
(\ref{fop.16}) that
\begin{equation}\label{fbi.22}\begin{split}
&\kappa _M(\mathrm{neigh\,}((0,0,-i),{\bf C}^{n-1}\times \{0 \} \times {\bf
  C}^-_{\eta _n}))\\ &=\mathrm{neigh\,}((0,0,1,0),{\bf
  C}^{n-1}_{x'}\times \{{x'}^*=0 \}\times
{\bf C}_t\times \{t^*=0 \}),
\end{split}
\end{equation}
and hence
\begin{equation}\label{fbi.20}
\kappa _{\widehat{T}MS}(\mathrm{neigh\,}(\kappa _T(0,0,-i),\Lambda
_{\Phi _3}))=\mathrm{neigh\,}((0,0,1,0),\Lambda _{0 \oplus 0}).
\end{equation}
The quantum version of (\ref{fbi.20}) is
\begin{equation}\label{fbi.21}
\widehat{T}MS:\, H_{\Phi _3,\pi _z\kappa _T(0,0,-i)}^\mathrm{loc}\to H_{0\oplus 0,(0,1)}^\mathrm{loc}.
\end{equation}

We also know that $\widehat{T}MS$ is an elliptic Fourier integral
operator. Consequently, (\ref{fbis.9}), (\ref{fbi.21}) have continuous
inverses. We also have the following result:
\begin{prop}\label{fbi1}
Assume that $u\in H^\mathrm{loc}_{\Phi _1,\pi _z\kappa
  _T(0,0,-i)}$ and that $\widehat{T}MSu\in H^\mathrm{loc}_{0\oplus
  0,(0,1)} $. Then $u\in H^\mathrm{loc}_{\Phi _3,\pi
  _z\kappa _T(0,0,-i)}$.
\end{prop}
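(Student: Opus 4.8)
The plan is to exploit that $\widehat{T}MS$ is an elliptic Fourier integral operator and to run a finite bootstrap on the exponential weight, using the two geometric facts (\ref{fbi.19}) and (\ref{fbi.20}) about $\kappa_{\widehat{T}MS}$.

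First I would reduce to $\Phi_1^\mathrm{ext}$. The base point $(0,0,-i)$ lies in the interior of the component $T^*\partial\Omega\times{\bf C}^-_{\eta_n}$ of $F$ (its fibre coordinate $\eta_n=-i$ has strictly negative imaginary part), so by (\ref{fus.7}) (see also (\ref{fus.9})) one has $\Phi_1=\Phi_1^\mathrm{ext}$ near $z_0:=\pi_z\kappa_T(0,0,-i)$, and the hypothesis reads $u\in H^\mathrm{loc}_{\Phi_1^\mathrm{ext},z_0}$. Recall next that $\widehat{T}MS$ is an elliptic FIO with associated canonical transformation $\kappa_{\widehat{T}MS}=\kappa_{\widehat{T}}\circ\kappa_M\circ\kappa_S$, which by (\ref{fbi.19}) maps $\Lambda_{\Phi_1^\mathrm{ext}}$ to $\Lambda_{\widehat{\Phi}_0\oplus0}$ over neighbourhoods of $z_0$, resp.\ $(0,1)$, and by (\ref{fbi.20}) maps $\Lambda_{\Phi_3}$ to $\Lambda_{0\oplus0}$. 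I would let $B$ be a two-sided parametrix of $\widehat{T}MS$; it is an elliptic FIO associated with $\kappa_{\widehat{T}MS}^{-1}$, hence the single operator $B$ realizes the inverses in both (\ref{fbis.9}) and (\ref{fbi.21}),
$$B:\ H^\mathrm{loc}_{\widehat{\Phi}_0\oplus0,(0,1)}\to H^\mathrm{loc}_{\Phi_1^\mathrm{ext},z_0},\qquad B:\ H^\mathrm{loc}_{0\oplus0,(0,1)}\to H^\mathrm{loc}_{\Phi_3,z_0},$$
and satisfies $B\widehat{T}MS=I+R$ with $R$ negligible, i.e.\ $R$ maps $H^\mathrm{loc}_\Psi$ into $H^\mathrm{loc}_{\Psi-1/C_0}$ for a fixed $C_0>0$, uniformly for $\Psi$ in the family of (continuous) weights with $\Phi_3\le\Psi\le\Phi_1^\mathrm{ext}$.

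The core step is then a finite bootstrap. Applying $B$ to the hypothesis $g:=\widehat{T}MSu\in H^\mathrm{loc}_{0\oplus0,(0,1)}$ gives $u=Bg-Ru$, where $Bg\in H^\mathrm{loc}_{\Phi_3,z_0}$ by the second mapping property of $B$, while $Ru\in H^\mathrm{loc}_{\Phi_1^\mathrm{ext}-1/C_0,z_0}$; hence $u\in H^\mathrm{loc}_{\Psi_1,z_0}$ with $\Psi_1:=\max(\Phi_3,\Phi_1^\mathrm{ext}-1/C_0)$. Feeding this back in repeatedly, after $k$ steps $u\in H^\mathrm{loc}_{\Psi_k,z_0}$ with $\Psi_k=\max(\Phi_3,\Phi_1^\mathrm{ext}-k/C_0)$, and since by (\ref{fus.13}) the difference $\Phi_1^\mathrm{ext}-\Phi_3$ is bounded on a fixed compact neighbourhood of $z_0$, for $k$ large enough $\Psi_k=\Phi_3$ there, giving $u\in H^\mathrm{loc}_{\Phi_3,z_0}$. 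Only finitely many steps are used, so the slight shrinking of the neighbourhood at each step is harmless.

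The main obstacle is not this bookkeeping but the first paragraph: one must know that the elliptic FIO $\widehat{T}MS$ — whose amplitude $c$ is only an elliptic \emph{analytic} symbol, not constant — admits a genuine two-sided parametrix $B$ that is simultaneously the inverse on each of the relevant pairs of $H^\mathrm{loc}$-spaces, with an error $R$ that loses a fixed amount on every intermediate weight. This is exactly what the analytic-symbol calculus of \cite{Sj82}, \cite{Sj90} provides (composition of metaplectic FIOs carrying analytic-symbol amplitudes, together with elliptic inversion of such symbols in the sense of Boutet de Monvel--Kr\'ee \cite{BoKr67}), applied to the factorization $\kappa_{\widehat{T}MS}=\kappa_{\widehat{T}}\circ\kappa_M\circ\kappa_S$ established earlier in the section. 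An essentially equivalent way to phrase the whole argument, which I might prefer to present, is the microlocal elliptic regularity statement: for the elliptic FIO $\widehat{T}MS$ one has $\widehat{T}MSu\in H^\mathrm{loc}_\Psi$ if and only if $u\in H^\mathrm{loc}_\Phi$, where $\Phi$ is the weight determined by $\kappa_{\widehat{T}MS}(\Lambda_\Phi)=\Lambda_\Psi$, applied here with $\Psi=0\oplus0$ and hence $\Phi=\Phi_3$.
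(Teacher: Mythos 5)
The paper does not actually spell out a proof of Proposition~\ref{fbi1}: it simply remarks that $\widehat{T}MS$ is an elliptic Fourier integral operator, so that (\ref{fbis.9}) and (\ref{fbi.21}) have continuous inverses, and then states the proposition. Your parametrix-and-bootstrap argument is therefore not to be compared against a written proof; it is a reasonable and, in my view, correct way of filling in what the paper leaves implicit, and it is entirely in the spirit of what the authors appeal to. In particular the preliminary reduction $\Phi_1=\Phi_1^{\mathrm{ext}}$ near $z_0=\pi_z\kappa_T(0,0,-i)$ is exactly right (the fibre coordinate $\eta_n=-i$ is strictly in ${\bf C}^-$, so $z_0\in\overline{H_-}$; cf.\ (\ref{fus.7}), (\ref{fus.9})), and the paper itself uses this identification at the step (\ref{ep.27})--(\ref{ep.28}).

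The one place where your write-up is a bit cavalier, and where a referee would press you, is the assertion that a single realization $B$ of the inverse simultaneously maps $H^{\mathrm{loc}}_{\widehat{\Phi}_0\oplus0,(0,1)}\to H^{\mathrm{loc}}_{\Phi_1^{\mathrm{ext}},z_0}$ and $H^{\mathrm{loc}}_{0\oplus0,(0,1)}\to H^{\mathrm{loc}}_{\Phi_3,z_0}$, with one negligible remainder $R$ losing a uniform $1/C_0$ on every intermediate weight $\Phi_3\le\Psi\le\Phi_1^{\mathrm{ext}}$. In the metaplectic/analytic framework the realization of $B$ is through a good contour, and this contour depends on the pair of weights; for different pairs the contours pass through different critical points, and the two realizations agree only up to errors controlled by a deformation argument. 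What rescues the statement here is a geometric coincidence you do not mention explicitly: by (\ref{fus.13}) and the fact that $(0,0,-i)$ lies on the fiberwise complexification $N^*\partial\Omega\otimes{\bf C}$ (and similarly $\widehat{\Phi}_0(0)=0$), one has $\Phi_1^{\mathrm{ext}}(z_0)=\Phi_3(z_0)$ and $\widehat{\Phi}_0\oplus0=0\oplus0$ at $(0,1)$, so the critical points of the good contours for the two pairs coincide at the base point. This is exactly what makes the interpolated weights $\Psi$ admissible and the uniform $1/C_0$ loss plausible; it is also what justifies using a single $R$ in the identity $u=Bg-Ru$ rather than a $\Psi$-dependent one. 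You correctly gesture at \cite{Sj82}, \cite{Sj90} for this machinery, which is where these statements live, but I would add a sentence isolating the coincidence of the weights and critical points at the base points, since it is the specific feature of the present geometry that makes the otherwise generic bootstrap actually run.

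Your final remark --- that the whole proposition is an instance of microlocal elliptic regularity, $\widehat{T}MSu\in H^{\mathrm{loc}}_\Psi \Leftrightarrow u\in H^{\mathrm{loc}}_\Phi$ where $\kappa_{\widehat{T}MS}(\Lambda_\Phi)=\Lambda_\Psi$, applied with $\Psi=0\oplus0$, $\Phi=\Phi_3$ --- is indeed the clean way to phrase it, and is almost certainly how the authors think of the proposition; the bootstrap is then a proof of that regularity statement in this setting rather than a different argument. So: correct, essentially the intended approach, with one technical point (the coincidence of weights and contours at the base point enabling the uniform-loss remainder) deserving explicit mention.
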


\section{End of the proof of the main result}\label{ep}
\setcounter{equation}{0}
We will work with FBI and Laplace transforms of functions that are
independent of $h$ or that have some special $h$-dependence. Consider
a formal Fourier integral operator $u\mapsto Tu$, given by
\begin{equation}\label{ep.1}
Tu(x;h)=Ch^\alpha \int e^{\frac{i}{h}\phi (x,y)}u(y)dy,
\end{equation}
where $\phi =\phi _T$ is a quadratic form on ${\bf C}^{2n}_{x,y}$
satisfying 
\begin{equation}\label{ep.2}
\det \phi ''_{xy}\ne 0,
\end{equation}
and hence generating a canonical transformation, that will be used
below.
\begin{prop}\label{ep1}
If $u$ is independent of $h$, we have
\begin{equation}\label{ep.3}
(hD_h+\frac{1}{h}P_\alpha (x,hD;h))Tu=0,
\end{equation}
where
\begin{equation}\label{ep.4}
P_\alpha =p(x,hD)+ih\left(\alpha +\frac{1}{2}\mathrm{tr\,}(\phi
  ''_{xx}{\phi ''_{yx}}^{-1}\phi ''_{yy}{\phi ''_{xy}}^{-1} )\right),
\end{equation}
\begin{equation}\label{ep.5}
\begin{split}
  p(x,\xi )=&\frac{1}{2}\phi ''_{xx}x\cdot x+x\cdot (\xi -\phi
  ''_{xx}x)\\ &+\frac{1}{2}{\phi ''_{yx}}^{-1}\phi ''_{yy}{\phi
    ''_{xy}}^{-1}(\xi -\phi ''_{xx}x)\cdot (\xi -\phi ''_{xx}x)\\
  =&-\frac{1}{2}\phi ''_{xx}x\cdot x +\frac{1}{2}\phi ''_{xx}{\phi
    ''_{yx}}^{-1}\phi ''_{yy}{\phi ''_{xy}}^{-1}\phi ''_{xx}x\cdot
  x\\ & +x\cdot \xi -{\phi ''_{yx}}^{-1}\phi ''_{yy}{\phi ''_{xy}}^{-1}\phi
  ''_{xx}x\cdot \xi +\frac{1}{2}{\phi ''_{yx}}^{-1}\phi ''_{yy}{\phi
    ''_{xy}}^{-1}\xi \cdot \xi .
\end{split}
\end{equation}
\end{prop}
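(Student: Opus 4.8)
The plan is to prove the differential equation (\ref{ep.3}) by direct computation, exploiting the fact that $\phi=\phi_T$ is quadratic so that $Tu$ is given by a Gaussian-type oscillatory integral and all the relevant operations on the exponential produce polynomial (in fact linear or quadratic) expressions. First I would observe that $hD_h$ acting on $Tu$ brings down both the prefactor contribution $h\alpha$ (from $Ch^\alpha$) and the phase contribution $-\tfrac{1}{h}\phi(x,y)$ (from differentiating $e^{\frac{i}{h}\phi}$ with respect to $h$): thus
\[
hD_h(Tu)(x;h)=\Bigl(\alpha h+\tfrac{1}{h}\cdot(-\phi(x,y))\Bigr)\text{-type terms under the integral sign.}
\]
More precisely, $hD_h e^{\frac{i}{h}\phi}=-\tfrac{1}{h}\phi\, e^{\frac{i}{h}\phi}$ and $hD_h(Ch^\alpha)=\alpha Ch^\alpha/i$ (with the convention $D_h=\frac{1}{i}\partial_h$; one has to keep track of the factor of $i$, which is exactly where the term $ih\alpha$ in (\ref{ep.4}) comes from). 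So the task reduces to exhibiting a semiclassical differential operator $P_\alpha(x,hD;h)$ in $x$ such that $P_\alpha(x,hD) e^{\frac{i}{h}\phi(x,y)}=\phi(x,y)e^{\frac{i}{h}\phi(x,y)}$ modulo a term that integrates to zero, i.e.\ modulo a $y$-divergence.

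The key step is the standard trick of trading $y$ for $\xi$. Since $\phi$ is quadratic, $\partial_y\phi(x,y)=\phi''_{yx}x+\phi''_{yy}y$, and because $\phi''_{yx}$ is invertible by (\ref{ep.2}) we can solve $y=({\phi''_{yx}})^{-1}(\partial_y\phi-\phi''_{yx}x)$; meanwhile $\partial_x\phi=\phi''_{xx}x+\phi''_{xy}y$, so
\[
\xi:=\partial_x\phi(x,y)=\phi''_{xx}x+\phi''_{xy}y,\qquad\text{hence}\qquad y=({\phi''_{xy}})^{-1}(\xi-\phi''_{xx}x).
\]
Substituting this into the quadratic form $\phi(x,y)=\tfrac12\phi''_{xx}x\cdot x+\phi''_{xy}y\cdot x+\tfrac12\phi''_{yy}y\cdot y$ and using the symmetry $\phi''_{yx}=\trans{(\phi''_{xy})}$ gives exactly the function $p(x,\xi)$ written in (\ref{ep.5}) — the two displayed forms of $p$ there are just the result before and after expanding $\phi''_{xy}y\cdot x=(\xi-\phi''_{xx}x)\cdot x$ and collecting. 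Then, inside the integral, $\frac{h}{i}\partial_{x_j}e^{\frac{i}{h}\phi}=(\partial_{x_j}\phi)e^{\frac{i}{h}\phi}=\xi_j e^{\frac{i}{h}\phi}$ and likewise $\frac{h}{i}\partial_{y_j}e^{\frac{i}{h}\phi}=(\partial_{y_j}\phi)e^{\frac{i}{h}\phi}$; integrating by parts in $y$ lets us replace $\partial_y\phi$ by $-\frac{h}{i}\partial_y$ acting to the left (no boundary terms, either because $u$ has compact support and we work with a good contour, or formally). This is what allows the replacement of the $y$-dependent quantity $\phi(x,y)$ by the operator $p(x,hD)$ acting in the $x$-variable.

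The one point requiring care — and the main (mild) obstacle — is the noncommutativity of $hD_{x_j}$ and multiplication by $x_k$: when we quantize the quadratic symbol $p(x,\xi)$ as the operator $p(x,hD)$, the two cross terms $x\cdot\xi$ (rather, the full $x$-$\xi$ mixed term ${\phi''_{yx}}^{-1}\phi''_{yy}{\phi''_{xy}}^{-1}\phi''_{xx}x\cdot\xi$ plus $x\cdot\xi$) and the pure-$\xi$ quadratic term ${\phi''_{yx}}^{-1}\phi''_{yy}{\phi''_{xy}}^{-1}\xi\cdot\xi/2$ generate an extra $\mathcal O(h)$ commutator term when one passes from the left-action-under-the-integral computation to a genuine differential operator with a fixed ordering of $x$'s and $D$'s. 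Tracking that commutator — computing $[\,hD_{x_j}, \text{(coefficient)}\,x_k\,]$ and summing — produces precisely the trace correction $\tfrac12\operatorname{tr}(\phi''_{xx}{\phi''_{yx}}^{-1}\phi''_{yy}{\phi''_{xy}}^{-1})$ in (\ref{ep.4}), which combines with $\alpha$ into the single subprincipal term $ih(\alpha+\tfrac12\operatorname{tr}(\cdots))$. I would carry this out by fixing the left quantization (all $x$'s to the left of all $D$'s, say), computing $p(x,hD)e^{\frac{i}{h}\phi}$ term by term, and collecting the principal part $\phi\, e^{\frac{i}{h}\phi}$ together with the $\mathcal O(h)$ remainder; matching against $-hD_h(Tu)$ then yields (\ref{ep.3}) with $P_\alpha$ exactly as stated. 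The computation is entirely routine once the substitution $y\leftrightarrow\xi$ and the integration-by-parts bookkeeping are set up, so I would not spell out every term.
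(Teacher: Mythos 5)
Your proposal matches the paper's proof: both compute $hD_hTu$ to extract the factor $ih\alpha+\phi(x,y)$ under the integral, write $\phi(x,y)=p(x,\phi'_x(x,y))$ by solving $\xi=\phi''_{xx}x+\phi''_{xy}y$ for $y$, and obtain the $\frac{h}{2i}\,\mathrm{tr}(\cdots)$ subprincipal correction by applying $p(x,hD_x)$ to $e^{i\phi/h}$ and collecting the second-derivative term. One aside should be dropped as a red herring: no integration by parts in $y$ is used or needed (the replacement of $\phi$ by $p(x,hD_x)$ proceeds entirely through $x$-differentiation of the exponential), and with $x$'s to the left only the $\xi\cdot\xi$ piece of $p$ produces the trace correction, not the $x\cdot\xi$ cross terms; also your intermediate formula for $y$ in terms of $\partial_y\phi$ should have $(\phi''_{yy})^{-1}$ rather than $(\phi''_{yx})^{-1}$, but it plays no role since you then correctly use $y=(\phi''_{xy})^{-1}(\xi-\phi''_{xx}x)$.
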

\begin{proof}
We have
\[
\begin{split}
hD_h\left(e^{\frac{i}{h}\phi (x,y)}
\right)&=-\frac{1}{h}e^{\frac{i}{h}\phi (x,y)},\\
hD_h\left(h^\alpha  \right)&=\frac{\alpha }{i}h^\alpha ,\\
hD_hTu(x;h)&=-\frac{1}{h}h^\alpha \int e^{\frac{i}{h}\phi
  (x,y)}(ih\alpha +\phi (x,y))u(y)dy,
\end{split}
\]
Try to write $\phi (x,y)=p(x,\phi '_x(x,y))$ for a suitable quadratic
form $p(x,\xi )$ (that will turn out to be the one given in
(\ref{ep.5})). We have
\begin{equation}\label{ep.6}
\phi (x,y)=\frac{1}{2}\phi ''_{xx}x\cdot x +\phi ''_{xy}y\cdot x
+\frac{1}{2}\phi ''_{yy}y\cdot y,
\end{equation}
\begin{equation}\label{ep.7}
\phi '_x=\phi ''_{xx}x+\phi ''_{xy}y,\hbox{ i.e. }y={\phi
  ''_{xy}}^{-1}(\phi '_x-\phi ''_{xx}x)
\end{equation}
and using the last relation from (\ref{ep.7}) in (\ref{ep.6}), we get 
\begin{equation}\label{ep.8}
\begin{split}
\phi (x,y)=&\frac{1}{2}\phi ''_{xx}x\cdot x+\phi ''_{yx}x\cdot {\phi
  ''_{xy}}^{-1}(\phi '_x-\phi ''_{xx}x)\\
&+\frac{1}{2}\phi ''_{yy}{\phi ''_{xy}}^{-1}(\phi '_x-\phi
''_{xx}x)\cdot {\phi ''_{xy}}^{-1}(\phi '_x-\phi ''_{xx}x), 
\end{split}
\end{equation}
where the $\phi ''_{yx}$ and ${\phi ''_{xy}}^{-1}$ in the second term
cancel, and we get $p(x,\phi '_x)$ with $p$ as in (\ref{ep.5}). 

\par To verify (\ref{ep.4}), it suffices to notice that
\[\begin{split}
e^{-\frac{i}{h}\phi (x,y)}p(x,hD_x)&\left(e^{\frac{i}{h}\phi
    (x,y)}\right)-p(x,\phi '_x)\\
&=\frac{1}{2}{\phi ''_{yx}}^{-1}\phi ''_{yy}{\phi
  ''_{xy}}^{-1}hD_x\cdot (\phi '_x)\\
&=\frac{1}{2}{\phi ''_{yx}}^{-1}\phi ''_{yy}{\phi
  ''_{xy}}^{-1}hD_x\cdot (\phi ''_{xx}x)\\
&=\frac{h}{2i}\phi ''_{xx}{\phi ''_{yx}}^{-1}\phi ''_{yy}{\phi
  ''_{xy}}^{-1}\partial _x\cdot x\\
&=\frac{h}{2i}\mathrm{tr\,}\left(\phi ''_{xx}{\phi ''_{yx}}^{-1}\phi ''_{yy}{\phi
  ''_{xy}}^{-1} \right).
\end{split}\]
\end{proof}
\begin{remark}\label{ep2}
{\rm
Let $\kappa _T:\, (y,-\phi '_y(x,y))\mapsto (x,\phi '_x(x,y))$ be the
canonical transformation associated to $T$ which can also be written
$$\kappa _T:\, (y,-(\phi ''_{yx}x+\phi ''_{yy}y)\mapsto (x,\phi
''_{xx}x+\phi ''_{xy}y),$$
or still $\kappa _T:\, (y,\eta )\mapsto (x,\xi )$ where
\[
\begin{split}
x&=-{\phi ''_{yx}}^{-1}(\eta +\phi ''_{yy}y)\\
\xi &=(\phi ''_{xy}-\phi ''_{xx}{\phi ''_{yx}}^{-1}\phi ''_{yy})y-\phi
''_{xx}{\phi ''_{yx}}^{-1}\eta .
\end{split}
\]
We see that the following three statements are equivalent:
\begin{itemize}
\item $\kappa _T$ maps the Lagrangian space ${\eta =0}$ to ${\xi =0}$.
\item $\phi ''_{xy}-\phi ''_{xx}{\phi ''_{yx}}^{-1}\phi ''_{yy}=0$.
\item $p(x,0)=0$, $p'_\xi (x,0)=0$, $\forall x$.
\end{itemize}
}
\end{remark}
\begin{ex}\label{ep3}
{\rm Consider
$$\widehat{T}{\cal L}u(x;h)=Ch^{\frac{1-n}{2}}\int
e^{\frac{i}{h}(\widehat{\phi } (x',y')+ix_ny_n)}u(y)dy,\ \phi =\phi
_{\widehat{T}}.$$ If $P'(x',hD_{x'};h)$ is the operator associated to
$\widehat{T}$ in $n-1$ variables, we get when $u$ is independent of
$h$,
\begin{equation}\label{ep.9}
(hD_h+\frac{1}{h}(P'(x',hD_{x'};h)+x_nhD_{x_n}))\widehat{T}{\cal L}u=0.
\end{equation}
Similarly (though not a direct consequence of Theorem \ref{ep1} but
rather of its method of proof) we have for ${\cal L}$ alone that
\begin{equation}\label{ep.9.5}
(hD_h+\frac{1}{h}x_nhD_{x_n}){\cal L}u=0.
\end{equation}
}
\end{ex}
\begin{ex}\label{ep4}
{\rm
Let $T$ be as above and assume that we are in the situation of Remark
\ref{ep2} so that $p(x,0)=0$, $p'_\xi (x,0)=0$. Then
$$
p(x,hD)=bhD\cdot hD
$$
where $b$ is a constant symmetric matrix. Then
$$
P_\alpha =p(x,hD)+ih(\alpha +f_0),\ f_0=\frac{n}{2}.
$$
and (\ref{ep.3}) reads
\begin{equation}\label{ep.10}
(hD_h+(hbD\cdot D+i(\alpha +f_0)))Tu=0.
\end{equation}
If $Tu=\sum_{m}^\infty h^kv_k\in H_0$ and $u$ is independent of $h$,
we can plug this expression into (\ref{ep.10}) and get the sequence of
equations,
\[
\begin{split}
&\left(\frac{m}{i} +i(\alpha +f_0) \right)v_m=0,\\
&\left(\frac{m+1}{i}+i(\alpha +f_0) \right)v_{m+1}+bD\cdot
Dv_{m}=0,\\
&\left(\frac{m+2}{i}+i(\alpha +f_0) \right)v_{m+2}+bD\cdot
Dv_{m+1}=0,\\
&...
\end{split}
\]
so unless $v_m\equiv 0$, we get $m=\alpha +f_0$. $v_m\in H_0$ can be
chosen arbitrary and $v_{m+1},\, v_{m+2},...$ are then uniquely determined.
}
\end{ex}

\par Now, consider the situation in Theorem \ref{int3} and let $q\in
L^\infty (\Omega )$ be independent of $h$ and such that $\sigma
_{\dot{{\Lambda}}}(y',t\eta _0')$ is a cl.a.s. on $\mathrm{neigh\,}(\{0
\}\times {\bf R}_+,{\bf R}^{n-1}\times {\bf R}_+)$ of order $-1$
(cf. (\ref{la.4}){),}
\begin{equation}\label{ep.11}
\sigma _{\dot{{\Lambda}}}(y',t\eta _0')\sim \sum_1^\infty n_k(y',t),
\end{equation}
where $n_k(y',t)$ is homogeneous of degree $-k$ in $t$.
\begin{equation}\label{ep.12}
|n_k(y',t)|\le C^{k+1}k^k|t|^{-k},\ y'\in \mathrm{neigh\,}(0,{\bf
  C}^{n-1}). 
\end{equation}
For the moment we shall only work with formal cl.a.s. and neglect
remainders in the asymptotic expansions. For the semi-classical symbol of
$\dot{{\Lambda}}$ we have
\begin{equation}\label{ep.13}\begin{split}
&\sigma _{\dot{{\Lambda}}}(y',t\eta _0'/h)\sim\sum_1^\infty
n_k(y',t/h)=\sum_1^\infty h^kn_k(y',t),\\ &(y',t)\in
\mathrm{neigh\,}((0,1),{\bf R}^{n-1}\times {\bf R}_+).
\end{split}
\end{equation}

\par Recall that $\sigma _{\dot{{\Lambda}}}(y',t\eta
_0'/h)=Mq(y',t;h)$. From (\ref{ep.13}) we infer that $\widehat{T}Mq$
is a cl.a.s. near $w'=0$, $t=1$:
\begin{equation}\label{ep.14}
\widehat{T}Mq\sim \sum_1^\infty h^km_k(w',t).
\end{equation}

Formally,
\begin{equation}\label{ep.15}
\widehat{T}M=(\widehat{T}M{\cal L}^{-1}){\cal L}.
\end{equation}
The canonical transformation $\kappa _{\cal L}$ is given by
$$
(y,\eta )\mapsto (y',i\eta _n,\eta ',iy_n).
$$
It maps the complex manifold $\eta '=0$, $y_n=0$ to the manifold $\{
(z,0) \}$ and the point $(0;0,-i)$ to $(0,1;0)$, so $\kappa _{{\cal
    \Lambda }^{-1}}=\kappa _{{\cal L}}^{-1}$ maps $\zeta =0$ to $\eta
'=0$, $y_n=0$ and we noticed in (\ref{fop.20.5})
(cf. (\ref{fop.19.4})), that $\kappa _M$ takes the complexified
conormal bundle to the zero section, and it maps
the point $(0;0,-i)$ to $(0,1;0)$. Thus $\kappa _{M{\cal L}^{-1}}$
maps the zero section $\zeta =0$ to the zero section and in particular $(0,1;0)$ to $(0,1;0)$. (We may
notice that this is global in the sense that we can extend $z_n$ to an
annulus, and we then get $t$ in an annulus.) Since $\kappa
_{\widehat{T}}$ maps the zero section to the zero section, we have the
same facts for $\kappa _{\widehat{T}M}$.

\par From the above, it is clear that $\widehat{T}M{\cal L}^{-1}$ maps
formal cl.a.s. to formal cl.a.s.

\par Recalling (\ref{ep.13}) for $\sigma _{\dot{{\Lambda}}}(y',t\eta
_0'/h)=Mq(y',t;h)$ and using that $\widehat{T}M{\cal L}^{-1}$ is an
elliptic Fourier integral operator whose canonical transformation maps
the zero-section to the zero-section, we see that there exists a
unique formal cl.a.s.
\begin{equation}\label{ep.18}
v\sim \sum_1^\infty v_k(z',z_n)h^k,\ z\in \mathrm{neigh\,}((0,1),{\bf
  C}^n),
\end{equation}
such that in the sense of formal stationary phase,
\begin{equation}\label{ep.19}
\widehat{T}Mq=\widehat{T}M{\cal L}^{-1}v.
\end{equation}
Now $q$ is independent of $h$, so $Mq$ satisfies a compatibility
equation of the form
\begin{equation}\label{ep.20}
\left( hD_h+\frac{1}{h}P_{\widehat{T}M}\right) Mq=0.
\end{equation}
This gives rise to a similar compatibility condition for $v$,
$$
\left( hD_h+\frac{1}{h}P_{{\cal L}M^{-1}\widehat{T}^{-1}\widehat{T}M}\right) v=0,
$$
or simply
$$\left(hD_h +\frac{1}{h}P_{\cal L} \right)v=0,$$
which is the same as (\ref{ep.9.5}):
\begin{equation}\label{ep.21}
(h\partial _h+z_n\partial _{z_n})v=0.
\end{equation}
Application of this to (\ref{ep.18}) gives
\begin{equation}\label{ep.22'}
(k+z_n\partial _{z_n})v_k=0,
\end{equation}
i.e.
\begin{equation}\label{ep.22}
v_k(z)=q_k(z')z_n^{-k},\ |q_k(z')|\le C^{k+1}k^k.
\end{equation}
Thus,
$$
v\sim \sum_1^\infty q_k(z')\left(\frac{h}{z_n} \right)^k
=\sum_0^\infty q_{k+1}(z')\left(\frac{h}{z_n} \right)^{k+1},
$$
and we see as in Section \ref{la} (with the difference that we now
deal with the semi-classical Laplace transform) that
\begin{equation}\label{ep.23}
v\sim {\cal L}{\widetilde{q}}(z;h),\ \widetilde{q}(y)=1_{[0,a]}(y_n)\sum
  _0^\infty \frac{q_{k+1}(y')}{k!}y_n^k,
\end{equation}
with $a>0$ small enough to ensure the convergence of the power series.

\par More precisely, (and now we end the limitation to formal symbols), as
in (\ref{fbis.10}), (\ref{fbi.4}), we check that 
\begin{equation}\label{ep.25}
\widehat{T}M\widetilde{q}\equiv (\widehat{T}M{\cal L}^{-1}){\cal
  L}\widetilde{q}\hbox{ in }H_{0,(0,1)}
\end{equation}
(up to an exponentially small error). By the construction of
$\widetilde{q}$, the right hand side is $\equiv \widehat{T}Mq$ in the
same space.

\par Put $r=q-\widetilde{q}$. Then 
\begin{equation}\label{ep.26}
\widehat{T}Mr\equiv 0\hbox{ in }H_{0,(0,1)}.
\end{equation}
Now, we replace ${\cal L}$ with $T$ and consider in the light of
(\ref{fbis.10}):
\begin{equation}\label{ep.27}
(\widehat{T}MS)Tr\equiv 0\hbox{ in }H_{0,(0,1)},
\end{equation}
which implies that $Tr\in H_{\Phi _1}$ satisfies
\begin{equation}\label{ep.28}
Tr\equiv 0\hbox{ in }H_{\Phi _1^\mathrm{ext},\pi _z\kappa _T(0;0,-i)}.
\end{equation}
As we saw in Section \ref{fus}, $\Lambda _{\Phi _1}$ contains the
closure $\overline{\Gamma }$ of the complex curve
$$
\Gamma =\kappa _T(\{(0;0,\eta _n);\, \Im \eta _n<0 \}),
$$
and $\kappa _T((0;0,-i))\in \Gamma $. Consequently, ${{\Phi
    _1}_\vert}_{\pi _z\Gamma }$ is harmonic and (\ref{ep.28}) and the
maximum principle imply that
\begin{equation}\label{ep.29}
Tr\equiv 0\hbox{ in }H_{\Phi _1,z},\ z\in \overline{\Gamma }.
\end{equation}
In particular,
\begin{equation}\label{ep.30}
Tr\equiv 0\hbox{ in }H_{\Phi _1,0}
\end{equation}
and a fortiori,
\begin{equation}\label{ep.31}
Tr\equiv 0\hbox{ in }H_{\Phi _0,0}.
\end{equation}
This implies that $r=0$ near
$y=0$. Hence $q=\widetilde{q}$ near $y=0$, which gives the theorem.
\section{Proof of Proposition \ref{int4}}\label{prp}
\setcounter{equation}{0}
We choose local coordinates $y=(y',y_n)$ as in the beginning of
Section \ref{la}. As in Proposition \ref{int4}, we assume that $q$ is
analytic in a neighborhood of $0$. { We shall first prove
  the analytic regularity of $\dot{\Lambda }(x',y')$ in $W'\times
  W'\setminus \mathrm{diag\,}(W'\times W')$, where $W'$ is a small
  neighborhood of $0$ in ${\bf R}^{n-1}$. Let $\chi \in C_0^\infty
  ({\bf R}^{n-1})$ has its support in a small neighborhood of $0$ and
  be equal to one in another such neighborhood. We may assume that the
  closure of $W'$ is contained in this second neighborhood. For
  $\alpha =(\alpha _{x'},\alpha _{\xi '})\in W'\times {\bf R}^{n-1}$,
  we put
\begin{equation}\label{ppr.1}
f_{\alpha }(x')=\chi (x')e^{\frac{i}{h}((x'-\alpha _{x'})\cdot \alpha
  _{\xi '}+\frac{i}{2}(x'-\alpha _{x'})^2)}.
\end{equation}
To prove the desired analytic regularity, it suffices (cf.\cite{Sj82})
to prove that for every $L\Subset W'\times W'\setminus
\mathrm{diag\,}(W'\times W')$,
\begin{equation}\label{ppr.2}
\iint \dot{\Lambda }(x',y')f_{\alpha }(x')f_{\beta }(y')dx'dy'={\cal
  O}(1)e^{-1/(Ch)},\end{equation}
for $(\alpha _{x'},\beta _{x'})\in L$, $\alpha _{\xi '},\beta _{\xi
  '}\in {\bf R}^{n-1}$, $1/2\le |(\alpha _{\xi '},\beta _{\xi '})|\le
2$, where $C=C_L>0$. The integral in (\ref{ppr.2}) is equal to 
\begin{equation}\label{ppr.3}
\int_{\Omega }q(z)K(f_{\alpha })(z) K(f_{\beta })(z) dz.
\end{equation}

\par If $|\alpha _{\xi '}|\ge 1/C$, then by analytic WKB (as we have
already used), we have up to an exponentially small error,
\begin{equation}\label{ppr.4}
K(f_\alpha )(z)=\widetilde{\chi }(z)b(z,\alpha ;h)e^{i\psi (z,\alpha )/h},
\end{equation}
where $\widetilde{\chi }\in C_0^\infty (\mathrm{neigh\,}(0,{\bf R}^n)$
is equal to one on $\overline{W'}\times \{z_n=0 \}$, $b$ is a
classical analytic symbol  of order $0$ and $\psi $ is the solution of
the eikonal equation
\begin{equation}\label{ppr.5}
\partial _{z_n}\psi =ir(z,\partial _{z'}\psi ),\ {{\psi
  }_\vert}_{z_n=0}=(z'-\alpha _{x'})\cdot \alpha _{\xi
  '}+\frac{i}{2}(z'-\alpha _{x'})^2. 
\end{equation}

\par If $|\alpha _{\xi '}|\ge 1/C$ and $|\beta _{\xi '}|\ge 1/C$, then
(\ref{ppr.4}) also holds with $\alpha $ replaced by $\beta $ and
(\ref{ppr.2}) follows, since $\alpha _{x'}\ne \beta _{x'}$ for
$(\alpha _{x'},\beta _{x'})\in L$ 

\par Recalling that, $1/2\le |(\alpha _{\xi '},\beta _{\xi '})|\le 2$,
it remains to discuss the case when one of $|\alpha _{\xi '}|$ and
$|\beta _{\xi '}|$ is $\ll 1$. We may assume that $|\alpha _{\xi
  '}|\ll 1$. By analytic regularity for elliptic boundary value
problems, we know that there exists $C>0$ such that $K(f_\alpha )$
extends to a holomorphic function of $z'$ in the domain
\begin{equation}\label{ppr.6}
\Re z'\in W',\ |\Im z'|\le 1/C,\ 0\le z_n\le 1/C,
\end{equation}
for all $\alpha \in {\bf R}^{2(n-1)}$. (This only uses that the
boundary data $f_\alpha $ has a holomorphic extension to a similar
domain and can be proved by tangential FBI-transforms combined with
complex WKB-constructions for Dirichlet problems with conditions at
$z_n=\mathrm{Const.}$.)

\par Up to an exponentially small error, the integral in (\ref{ppr.3})
is equal to
\begin{equation}\label{ppr.7}
\int_\Omega \widetilde{\chi }(z)q(z)K(f_\alpha )(z)b(z,\beta
;h)e^{i\psi (z,\beta )/h}dz,
\end{equation}
Thinking of $K(f_\alpha )(z)$ as low frequent in $z'$ since
$\alpha _{\xi '}$ is small, we make a contour deformation in $z'$:
\begin{equation}\label{ppr.8}
\Gamma :\ \mathrm{neigh\,}(\pi _{z'}\mathrm{supp\,}\widetilde{\chi
},{\bf R}^{n-1})\times \left[ 0,\frac{1}{C}\right]\ni z\mapsto (z'+i\delta
\widetilde{\chi }_1(z)\nu ,z_n),
\end{equation} 
where $0<\delta \ll 1$, $0\le \widetilde{\chi }_1\in C_0^\infty
\left(\mathrm{neigh\,}(\pi _{z'}\mathrm{supp\,}\widetilde{\chi },{\bf
  R}^{n-1})\times \left[ 0,\frac{1}{2C}\left[ \right)\right . \right .$ is equal to 1 on $
\mathrm{neigh\,}(\pi _{z'}\mathrm{supp\,}\widetilde{\chi },{\bf
  R}^{n-1})\times \left[0,\frac{1}{3C}\right]$ and we choose 
\begin{equation}\label{ppr.9}
\nu =\alpha _{\xi '}.
\end{equation}
$\widetilde{\chi }$ is still well-defined along $\Gamma $, snce the
deformation takes place inside the region where $\widetilde{\chi }=1$
and (\ref{ppr.7}) is equal to
\begin{equation}\label{ppr.10}
\int_\Gamma \widetilde{\chi }(z)q(z)K(f_\alpha )(z)b(z,\beta
;h)e^{i\psi (z,\beta )/h}dz.
\end{equation}
By the choice of $\Gamma $, we see that $be^{i\psi /h}={\cal
  O}(1)e^{-\delta /(C_0h)}$ along $\Gamma $, where $C_0$ is
independent of $\delta $. Now we can view $\Gamma $ as a global
complex deformation of $\Omega $ which coincides with $\Omega $
outside the support of $\widetilde{\chi }_1$ and on $\Gamma $ we can
consider $u=K(f_\alpha )$
as the solution of the Dirichlet problem
\begin{equation}\label{ppr.11}
(h^2\Delta -h^2V)u=0\hbox{ in }\Gamma ,\ {{u}_\vert}_{\partial \Gamma
}={{f_\alpha }_\vert}_{\partial \Gamma }.
\end{equation}
Now, 
$$
{{f_\alpha }_\vert}_{\partial \Gamma }={\cal O}(1)e^{C_0\delta |\alpha
_{\xi '}|/h},
$$
so by standard estimates for the elliptic and globally well-posed
Dirichlet problem (\ref{ppr.11}), we conclude that 
\begin{equation}\label{ppr.12}
{{K(f_\alpha )}_\vert}_{\Gamma }={\cal O}(1)e^{C_0\delta |\alpha _{\xi
  '}|/h}
\end{equation}
in the $L^2$-sense. It follows that the integral (\ref{ppr.10})
is 
$$
{\cal O}(1)e^{\delta (C_0|\alpha _{\xi '}|-1/C_0)/h},
$$
which is exponentially decaying, provided that $|\alpha _{\xi '}|\le
1/(2C_0^2)$. This concludes the proof of (\ref{ppr.2}) and hence of
the off diagonal analyticity of $\dot{\Lambda }(x',y')$.
}

\par
We {next prove the statement about the symbol and} adopt the alternative definition
of symbols in Remark \ref{int2.7}. It will also be convenient to
consider the semi-classical symbol of $\dot{{\Lambda}}$, $\sigma
_{\dot{{\Lambda}}}(y',\eta ';h)=\sigma _{\dot{{\Lambda}}}(y',\eta
'/h)$. For $y'\in \mathrm{neigh\,}(0,{\bf R}^{n-1})$,
\begin{equation}\label{prp.1}
\begin{split}
&\sigma _{\dot{{\Lambda}}}(y',\eta ';h)=\\
&-\partial _{y_n}GqK\left(\int \chi (t')e_{t'}(\cdot ;h)e^{i(\cdot )\cdot
  \eta '/h}dt'\right)(y',0)e^{-iy'\cdot \eta '/h},
\end{split}
\end{equation}
where $\chi $ and $e_t$ were defined in Remark \ref{int2.7} with $n$
there replaced by $n-1$. By analytic WKB (as we already used), we have
up to an exponentially small error,
\begin{equation}\label{prp.3}
K(e_{t'}(\cdot ;h)e^{i(\cdot )\cdot \eta
  '/h})=Ch^{\frac{1-n}{2}}a(y,\eta ';h)e^{i\phi (y,t,\eta ')/h},
\end{equation}
where $\phi $ is the solution of the eikonal problem
\begin{equation}\label{prp.4}
\partial _{y_n}\phi =ir(y,\partial _{y'}\phi )^{\frac{1}{2}},\ {{\phi
  }_\vert}_{y_n=0}=y'\cdot \eta '+\frac{i}{2}(y'-t)^2,
\end{equation}
and $a$ is an cl.a.s. of order 0, obtained from solving a sequence of
transport equations with the ``initial'' condition $a(y',0,\eta
';h)=1$.

Using again the analytic WKB-method we can find a cl.a.s. $b$ of order
0 in $h$ which solves the following inhomogeneous problem up to
exponentially small errors:
\[
\begin{cases}
(h^2\Delta -h^2V)(h^{\frac{1-n}{2}+3}b(y,t,\eta ';h)e^{\frac{i}{h}\phi
  (y,t,\eta ')}=Ch^{\frac{1-n}{2}+2}ae^{\frac{i}{h}\phi }q,\\
b(y',0,t,\eta ';h)=0.
\end{cases}
\]
Then up to exponentially small errors,
$$
GqK(e_t(\cdot ;h)e^{i(\cdot )\cdot \eta '/h})\equiv
h^{\frac{1-n}{2}+1}b(y,t,\eta ';h)e^{\frac{i}{h}\phi (y,t,\eta ')}
$$
and similarly for the gradients, so 
$$
-\left(\partial _{y_n}\right)_{y_n=0}GqK(e_t(\cdot ;h)e^{i(\cdot
  )\cdot \eta '/h})\equiv -h^{\frac{3-n}{2}}(\partial _{y_n}b)(y',0,t,\eta
';h)e^{\frac{i}{h}(y'\cdot \eta '+\frac{i}{2}(y'-t')^2)}.
$$
Multiplying with $\chi (t')$ and integrating in $t'$, we see that
$\sigma _{\dot{{\Lambda}}}(y',\eta ';h)$ is a cl.a.s. in the
semi-classical sense and this implies that $\sigma _{\dot{{\Lambda}}}(y',\eta)$
is a cl.a.s.

\end{document}